\newcommand{\twocm}[1]{#1} 
\newcommand{\onecm}[1]{}
\newtheorem{thm}{Theorem}
 \newtheorem{assumption}{Assumption}
 \newtheorem{rem}[theorem]{Remark}
 \numberwithin{equation}{section}
\newcommand{\beq}{\begin{equation}}
\newcommand{\eeq}{\end{equation}}
\newcommand{\cV}{{\mathcal{V}}}
\newcommand{\argmin}{\mathop{\mathrm{argmin}}}
\newcommand{\abs}[1]{\, {\left\vert #1\right\vert}\, }
\newcommand{\myatop}[2]{\genfrac{}{}{1.5pt}{}{#1}{#2}}
\definecolor{newcolor}{rgb}{.8,.349,.1}
\newcommand{\RN}[1]{%
  \textup{\uppercase\expandafter{\romannumeral#1}}%
}
\title{Bilinear constraint based ADMM for mixed Poisson-Gaussian noise removal}
\author{
     Jie Zhang\thanks{School of Mathematical Sciences, Tianjin Normal University}
\and Yuping Duan\thanks{Center for Applied Mathematics, Tianjin  University}
\and Yue Lu\thanks{School of Mathematical Sciences, Tianjin Normal University}
\and Michael K. Ng \thanks{Department of Mathematics, The University of Hong Kong}
\and Huibin Chang\thanks{Corresponding author. School of Mathematical Sciences, Tianjin Normal University}
}
\begin{document}

\maketitle
\begin{abstract}
In this paper, we propose new operator-splitting algorithms for the total variation regularized infimal convolution (TV-IC) model \cite{article11}  in order to remove mixed Poisson-Gaussian (MPG) noise. In the existing splitting algorithm for TV-IC, an inner loop by Newton method had to be adopted for one nonlinear optimization subproblem, which increased the computation cost per outer loop. By introducing a new bilinear constraint and applying the alternating direction method of multipliers (ADMM), all subproblems of the proposed algorithms named as BCA (short for {\bf B}ilinear {\bf C}onstraint based {\bf A}DMM algorithm) and BCA$_{f}$ (short for a variant of BCA with ${\bf f}$ully splitting form) can be very efficiently solved; especially for the proposed  BCA$_{f}$, they can be calculated without any inner iterations. Under mild conditions, the convergence of the proposed BCA is investigated. Numerically, compared to existing primal-dual algorithms for the TV-IC model, the proposed algorithms, with fewer tunable parameters, converge much faster and produce comparable results meanwhile.
\end{abstract}

\begin{keywords}
Mixed Poisson-Gaussian noise; total variation; alternating direction method of multipliers; bilinear constraint; convergence.
\end{keywords}

\section{Introduction}
As the result of photon counting and thermal noise to the detectors, it is very common that the observed image is corrupted by the mixed Poisson and Gaussian (MPG) noise. 
The MPG denoising  has been extensively studied in \cite{article15,article16,article19,article20,article21,article22} and references therein. 
Generally speaking, the idea of MPG noise removal is based on the maximum a posteriori (MAP) following the Bayes' law. Chakrabarti and Zickler \cite{article3} approximated the MPG noise with a shifted Poisson likelihood. A generalized Anscombe transformation was proposed for MPG noise removal in \cite{Starck:1998:IPD:289385,article18}, while its unbiased inversion was given in \cite{article17}. In order to choose a correct MPG noise model, Reyes and Sch\"onlieb \cite{article12} proposed a nonsmooth PDE-constrained optimization strategy. A reweighted $L^2$ method was proposed by Li et al. \cite{article9}, which approximated the Poisson component noise with weighted Gaussian noise. The convexity and Lipschitz differentiability of Poisson-Gaussian negative log-likelihood was proven by Chouzenoux et al. in \cite{article8}, where a convergent primal-dual algorithm was given in the case of approximation of the infinite sum for data discrepancy.

More recent works considered the general joint MAP formulation, showing that Gaussian noise model \cite{article6} and Poisson noise model \cite{article4} can be combined together in order to remove the MPG noise. 
Lanza et al. \cite{article10} proposed a primal-dual based iterative algorithm for total variation regularized model (TV-PD), where one subproblem required additional inner loop by Newton method. In practice, in order to reduce computational cost, the TV-PD algorithm ran with very few Newton iterations and the corresponding convergence guarantee with such inexact inner solver was unknown. Calatroni et al. \cite{article11} proposed {the total variation regularized infimal convolution} (TV-IC) model, consisting of infimal convolution combination of standard data fidelities classically associated to one single-noise distribution, and a total variation (TV) regularization, which is essentially an extension of  {\cite{article10}} by relaxing the relations of two data fitting terms. A semi-smooth Newton algorithm was proposed in \cite{article11} with the weak singularity of the first order derivative and high dimension linear systems. 

{In order to solve the TV-IC model more efficiently, we will introduce a new bilinear constraint to reformulate the model, which essentially  helps to establish the iterative algorithms  without Newton iteration in the inner loop.  Then we apply alternating direction method of multipliers (ADMM)  \cite{glowinski1975approximation,gabay1976dual,eckstein1992douglas,ADMM1,wu2011augmented} to the reformulated model, leading to the proposed {\bf B}ilinear {\bf C}onstraint based {\bf A}DMM algorithm (BCA). 
}
Due to the nonconvex term in the augmented Lagrangian caused by the bilinear constraint, it seems quite difficult to study the global convergence. Instead, by assuming the iterative sequences have a uniform and strictly positive lower bounds, we prove the local convergence of proposed BCA, in the sense that corresponding iterative sequences are bounded and any limit point is a stationary point of the saddle problem of the augmented Lagrangian functional. 
Due to the existence of total variation term of the original variable, inner loop is still needed for the proposed BCA. In order to reduce such extra computational cost, we further develop a variant of {\bf BCA} with {\bf f}ully splitting form (BCA$_f$). Extensive numerical experiments further verify the faster convergence of the proposed algorithms while producing comparable recovery results. Especially, it demonstrates that the proposed algorithms with fewer tunable parameters converge much faster  than  TV-PD  \cite{article10} for the TV-IC model.   

This paper is organized as follows. Section 2 reviews briefly the TV-IC model and ADMM algorithm. Section 3 introduces the proposed BCA and BCA$_f$ algorithms, and the convergence analysis of BCA to the stationary points is also provided. Section 4 presents the numerical experiments to show the performances of proposed algorithms in terms of convergence and recovery quality as well as the robustness with respect to the parameters and the number of inner iterations of BCA. Finally, conclusions and future work are given in section 5.

\section{Review of TV-IC Model and ADMM}
\label{sec:review}
In this section, we will briefly review the TV-IC model  and the ADMM algorithm.
\subsection{Review of TV-IC Model}
Let $u, f\in \mathbb{R}^n$ be the ground truth and observed images corrupted by MPG noise respectively, satisfying  
\[f=v+\mathbf  n,\]
where $v\sim \mathrm{Poisson}(u)$, $\mathbf  n\sim \mathcal N (0,\sigma^{2})$.
The general joint MAP estimation \cite{article10,article11} is given 
\begin{equation}
\label{eq:1}
\begin{aligned}
    ({u^\star},{v^\star})&=\arg\max_{(u,v)}\prod\limits_{i}  P(v_i,u_i|f_i)\\
                       &=\arg\max_{(u,v)} \prod\limits_{i}  P(f_i|v_i)P(v_i|u_i)P(u_i).
\end{aligned}
\end{equation}
Incorporating the density function of Poisson and Gaussian distributions, and further taking the negative logarithm in \eqref{eq:1}, one has
\begin{displaymath}
\begin{aligned}
(u^\star,v^\star)&=\arg\min\limits_{(u,v)} -\ln(\prod\limits_{i} P(f_i|v_i)P(v_i|u_i)P(u_i))\\
&=\arg\min\limits_{(u,v)}\left\{\frac{\lambda_{1}}{2}\sum\limits_{i}(f_{i}-v_{i})^{2}+\sum\limits_{i}|\nabla u_{i}|+\lambda_{2}\sum\limits_{i}(u_i-v_i\ln u_i+\ln {v_i!})\right\},
\end{aligned}
\end{displaymath}
with $\nabla$ denoting the discrete gradient (finite difference) operator and $|\cdot|$ denotes the $L^2$ norm of a vector, 
where a Gibbs prior distribution of $P(u_i)=\exp(-|\nabla u_i|)$ is considered.

Using the standard Stirling approximation of the logarithm of the factorial function, the following TV-IC model was established \cite{article10,article11}
\begin{equation}
\label{eq:2}
\min\limits_{u,v} H(u,v),
\end{equation}
where $H(u,v)$ is:
\begin{displaymath}
H(u,v)=\frac{\lambda_{1}}{2}\sum\limits_{i}(f_{i}-v_{i})^{2}+\lambda_{2}\sum\limits_{i}(u_i-{ v_i\ln\frac{ u_i}{v_i}}-v_i)+\sum\limits_{i}\vert\nabla u_{i}\vert+\chi_{\mathcal{V}}(v),
\end{displaymath}
$\chi_{\cV}$ is the characteristic function of the positivity constraint set $\cV=\{v:v_{i}\geq \epsilon> 0~\forall i\}$ 
\begin{displaymath}
\chi_{\cV}(v)=\left\{\begin{array}{ll}0&~v\in \cV,\\
+\infty&\text{~otherwise,}
\end{array}\right.
\end{displaymath}
which is derived by the property of $v$ ($v\sim \mathrm{Poisson}(u)$).  
The notation $\sum\limits_{i}|\nabla u_i|$ denotes the standard discrete TV regularization. 
Here we remark that we require that $v$ is lower bounded by a positive scalar $\epsilon$, which is introduced for the purpose of studying  convergence guarantee of proposed algorithms.

\subsection{Review of ADMM} \label{secReviewADMM}
The ADMM \cite{glowinski1975approximation,gabay1976dual,eckstein1992douglas,ADMM1,wu2011augmented,wang2015global,Mei2018} is one of the popular first-order operator-splitting algorithm in image processing, which can handle complex constraints and non-smooth and non-convex objective functional. Compared with the gradient descent algorithm, it is more stable since it gets rid of directly calculating the derivative of the objective functional and therefore allows for big stepsize. Hence, we apply the ADMM to solve the TV-IC model. In this part, we will give a brief introduction of the ADMM. Consider the optimization problem below
\begin{equation}
\label{eq:3}
\begin{aligned}
\begin{array}{ll}{\min\limits_{\mathbf x, \mathbf z}} & {f(\mathbf x)+g(\mathbf z)} \\ 
{\text { s.t. }} & {A \mathbf x+B \mathbf z=\mathbf m}
\end{array}
\end{aligned}
\end{equation}
with variables $\mathbf x \in \mathbb{R}^{n}$ and $\mathbf z \in \mathbb{R}^{m}$, where $A \in \mathbb{R}^{p \times n}$, $B \in \mathbb{R}^{p \times m}$, and $\mathbf m \in \mathbb{R}^{p}$. 
The augmented Lagrangian is given below
\begin{displaymath}
L_{\rho}(\mathbf x, \mathbf z, \mathbf y)= f(\mathbf x)+g(\mathbf z)+\mathbf y^{T}(A \mathbf x+B \mathbf z-\mathbf m)+\frac{\rho}{2}\left\|A \mathbf x+B \mathbf z-\mathbf m\right\|_{2}^{2},
\end{displaymath}
with the parameter $\rho>0$ and the multiplier $\mathbf y\in\mathbb R^p$.
In order to solve the saddle point problem 
\[
\max_{\mathbf y}\min_{\mathbf x,\mathbf z} L_{\rho}(\mathbf x, \mathbf z, \mathbf y),
\]
The ADMM consists of the following iterations to determine $(k+1)^{th}$ solutions as 
\begin{displaymath}
\begin{aligned}
&\mathbf x^{k+1} :=\argmin_{\mathbf x} L_{\rho}\left(\mathbf x, \mathbf z^{k}, \mathbf y^{k}\right)\\
&\mathbf z^{k+1} :=\argmin_{\mathbf z} L_{\rho}\left(\mathbf x^{k+1}, \mathbf z, \mathbf y^{k}\right)\\
&\mathbf y^{k+1} :=\mathbf y^{k}+\rho\left(A \mathbf x^{k+1}+B \mathbf z^{k+1}-\mathbf m\right)，
\end{aligned}
\end{displaymath}
given the previous iteration solutions $(\mathbf x^k, \mathbf z^k, \mathbf y^k)$. 

\section{Proposed Algorithms}
\label{sec:algorithm}
In this section, we will consider how to design more efficient operator-splitting algorithms  based on ADMM  to solve the TV-IC model \eqref{eq:2}. If decoupling the problem by introducing an auxiliary variable to replace the original variable $u$ following \cite{article10}, it will have a subproblem without closed-form solution, due to the existence of the term $v_i\ln v_i$. In order to solve  $v-$subproblem, an inner loop by Newton method is needed, which is time-consuming and lack of convergence guarantee with {\color{red}a} few inner iterations. To further speed up the convergence, a new bilinear constraint ($u_i=v_iw_i$) will be introduced such that the resulting BCA  algorithm consists of standard TV-L2 denoising for variable $u$,  and simple closed form solutions for $v$ and $w$.  Its convergence is further derived under the assumption that the iterative sequence of $w$ is uniformly bounded below by a positive number. In order to get a fully splitting scheme, i.e. all subproblems have closed form solutions, a typical constraint $p_i=\nabla u_i$ is introduced additionally such that  the BCA$_f$ is obtained within the framework of ADMM.  Although we can not prove its theoretical convergence following the technique developed for BCA, it converges well numerically as demonstrated in the numerical section. 

\subsection{BCA}

By introducing the bilinear constraint $u_i = v_i w_i$, we rewrite \eqref{eq:2} as the following equivalent constrained optimization problem:
\begin{equation}
\label{eq:4}
\begin{aligned}
&\min\limits_{u,v, w}\left\{ \frac{\lambda_{1}}{2}\sum\limits_{i}(f_{i}-v_{i})^{2}+\lambda_{2}\sum\limits_{i}(u_i-v_i\ln w_i-v_i)+\sum\limits_{i}\vert \nabla u_{i}\vert+\chi_{\mathcal{V}}(v)\right\},\\
&s.t.\ \ \ \ \ \ \  u_i=v_i w_i,\  \forall 1\leq i\leq n.
\end{aligned}
\end{equation}
Readily one sees that the term $v_i\ln v_i$ disappears, hence we can design a fast algorithm with $v$ and $w$ subproblems all having closed form solutions. 
As  reviewed in subsection \ref{secReviewADMM}, one has to establish the augmented Lagrangian of the above constrained optimization problem with the penalization parameter $\alpha>0$ and the multiplier $\Lambda$, is given below
\begin{equation}
\label{eq:5}
\begin{aligned}
L_{\alpha}(u,v,w,\Lambda)=& \frac{\lambda_{1}}{2}\sum\limits_{i}(f_{i}-v_{i})^{2}+\sum\limits_{i}\vert \nabla u_{i}\vert+\chi_{\mathcal{V}}(v)\\
&+\lambda_{2}\sum\limits_{i}(u_i-v_i\ln w_i-v_i)\\
&+\langle \Lambda,v\circ w-u\rangle+\frac{\alpha}{2}\Vert v\circ w-u\Vert^2,
\end{aligned}
\end{equation}
where $\left\langle\cdot\right\rangle$ and $\left\|\cdot\right\|$ denote the inner product and norm in $L^2$ space {respectively}, and $\circ$ denotes the element-wise multiplication. Note that all the vector multiplications and divisions in this paper are element-wise.

Given the previous iterative solution $(u^k,v^k,w^k)$, the ADMM updates the sequence $(u^{k+1},v^{k+1},w^{k+1})$ by solving three subproblems w.r.t. $u,$ $v,$ $w$, and multiplier update, which is given below:
\begin{subequations}
\begin{numcases}{}
u^{k+1}=\argmin_{u}L_{\alpha}(u,v^k,w^k,\Lambda^k),\label{eq:6}\\
v^{k+1}=\argmin_v L_{\alpha}(u^{k+1},v,w^k,\Lambda^k),\label{eq:7}\\
w^{k+1}=\argmin_w L_{\alpha}(u^{k+1},v^{k+1},w,\Lambda^k),\label{eq:8}\\
\Lambda^{k+1}=\Lambda^k+\alpha(v^{k+1}\circ w^{k+1}-u^{k+1}).\label{eq:9}
\end{numcases}
\end{subequations}
We will show how to solve these subproblems in the rest of this part.

First, we consider the $u$-subproblem as 
\begin{equation}
\label{eq:10}
\begin{aligned}
u^{k+1}&=\argmin_{u}\left\{\lambda_2\sum\limits_i u_i+\langle\Lambda^{k},v^{k}\circ w^{k}-u\rangle+\frac{\alpha}{2}\left\| v^{k}\circ w^{k}-u\right\|^{2}+\sum\limits_i\abs{\nabla u_i}\right\}\\
&=\argmin_{u}\left\{\frac{\alpha}{2}\left\|v^k\circ w^k+\frac{\Lambda^k}{\alpha}-\frac{\lambda_2\mathbf{1}}{\alpha}-u\right\|^2+\sum\limits_i\abs{\nabla u_i} \right\},
\end{aligned}
\end{equation}
where $\mathbf{1}\in \mathbb R^n$ is a vector whose elements are all equal to one. This is a standard TV-L2 \cite{article6} optimization problem, and one can adopt the gradient projection algorithm for the pre-dual form of total variation minimization \cite{Chambolle:2004:ATV:964969.964985}.

{
As the update rule of $w$-subproblem can simplify the calculation of $v$-subproblem, we consider $w$-subproblem first.
\begin{displaymath}
w^{k+1}=\argmin_{w}\sum\limits_{i}\left[-\lambda_2 v_i^{k+1}\ln w_i+\frac{\alpha}{2}(v_{i}^{k+1} w_i+\frac{\Lambda_{i}^k}{\alpha}-u_i^{k+1})^2\right].
\end{displaymath}
Obviously it is a convex optimization problem. One can readily get the scalar optimization problem of this convex optimization problem 
\begin{displaymath}
{w_i^{k+1}=\arg\min_{w_i}\left\{-\lambda_2v_i^{k+1} \ln w_i+\frac{\alpha}{2}(v_i^{k+1} w_i+\frac{\Lambda_i^k}{\alpha}-u_i^{k+1})^2 \right\}.}
\end{displaymath}
The optimality condition of the above problem is
\begin{displaymath}
\alpha(v_i^{k+1})^2w_i^2+\left(\Lambda_i^k v_i^{k+1}-\alpha v_i^{k+1} u_i^{k+1}\right)w_i-\lambda_2v_i^{k+1}=0.
\end{displaymath}
We can obtain a closed-form solution (also the global minimizer) of this problem
\begin{equation}
\label{eq:12}
w_i^{k+1}=\frac{1}{2v_i^{k+1}}\left[(u_i^{k+1}-\frac{\Lambda_{i}^k}{\alpha})+\sqrt{(u_i^{k+1}-\frac{\Lambda_{i}^k}{\alpha})^2+\frac{4\lambda_2 v_i^{k+1}}{\alpha}}\ \right].
\end{equation}

Finally, we consider the $v$-subproblem as 
\begin{displaymath}
v^{k+1}=\argmin_{v_i\ge\epsilon}\sum\limits_{i}\left[\frac{\lambda_1}{2}(f_i-v_i)^2-\lambda_2(v_i\ln w_i^k+v_i)+\frac{\alpha}{2}(v_i w_i^k+\frac{\Lambda_{i}^k}{\alpha}-u_i^{k+1})^2\right].
\end{displaymath}
This optimization problem can be computed independently with respect to each component of $v$, therefore, we can consider the scalar optimization problem
\begin{displaymath}
v_i^{k+1}=\argmin_{v_i\ge \epsilon}\left\{\frac{\lambda_1}{2}(f_i-v_i)^2-\lambda_2(v_i\ln w_i^{k}+v_i)+\frac{\alpha}{2}(v_i w_i^{k}+\frac{\Lambda_i^k}{\alpha}-u_i^{k+1})^2\right\}.
\end{displaymath}
One easily obtains the optimal solution of the above problem below
\begin{displaymath}
v^{k+1}=\max\left(\epsilon\mathbf 1,\tilde{v}^{k+1}\right),
\end{displaymath}
with the notation $\max(\cdot,\cdot)$ taking the {element-wise} maximum of two vectors,  where $\tilde{v}^{k+1}$ is defined as follows
\begin{equation}
\label{eq:11}
\tilde{v}^{k+1}=\myatop{\mathbf{1}}{\lambda_1\mathbf{1}+\alpha(w^k)^2}\circ\left(\lambda_1 f+\lambda_2\ln w^k+\lambda_2-w^k\circ\Lambda^k+\alpha w^k\circ u^{k+1}\right),
\end{equation}
which corresponds to the unconstrained optimal solution. Note that $\myatop{\mathbf{a}}{\mathbf{b}}$ denotes the element-wise division of two vectors $\mathbf{a}$ and $\mathbf{b}$.


In order to further simplify the calculation of $v-$subproblem, one can obtain the following lemma.
\begin{lemma}
\label{le:1}
Letting $\Lambda^{k+1}, w^{k+1}$ be generated by \eqref{eq:6}-\eqref{eq:9}, then  we have
\begin{equation}
\label{eq:13}
\Lambda^{k+1}\circ w^{k+1}=\lambda_2\mathbf{1}.
\end{equation}
\end{lemma}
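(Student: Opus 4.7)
The plan is to derive the identity directly from the first-order optimality condition of the $w$-subproblem combined with the multiplier update rule. The key observation is that the quadratic equation characterizing $w_i^{k+1}$ already contains the quantity that updates $\Lambda^k$ into $\Lambda^{k+1}$, so only an algebraic rearrangement is needed.

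First, I would recall the optimality condition for the $w$-subproblem, namely
\begin{displaymath}
\alpha (v_i^{k+1})^2 (w_i^{k+1})^2 + (\Lambda_i^k v_i^{k+1} - \alpha v_i^{k+1} u_i^{k+1}) w_i^{k+1} - \lambda_2 v_i^{k+1} = 0.
\end{displaymath}
Since $v^{k+1} = \max(\epsilon\mathbf{1}, \tilde v^{k+1})$ satisfies $v_i^{k+1} \geq \epsilon > 0$ by construction, I can divide through by $v_i^{k+1}$ without issue, obtaining
\begin{displaymath}
\alpha v_i^{k+1} (w_i^{k+1})^2 + (\Lambda_i^k - \alpha u_i^{k+1}) w_i^{k+1} = \lambda_2.
\end{displaymath}

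Second, I would factor $w_i^{k+1}$ out of the left-hand side to arrive at
\begin{displaymath}
w_i^{k+1} \bigl[ \Lambda_i^k + \alpha (v_i^{k+1} w_i^{k+1} - u_i^{k+1}) \bigr] = \lambda_2,
\end{displaymath}
and finally recognize that the bracket is precisely $\Lambda_i^{k+1}$ as defined by the multiplier update \eqref{eq:9}. This yields $w_i^{k+1} \Lambda_i^{k+1} = \lambda_2$ componentwise, which is the claimed vector identity \eqref{eq:13}.

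No real obstacle is expected: the proof is essentially a one-line algebraic manipulation. The only subtle point to confirm is that the division by $v_i^{k+1}$ is legitimate, which follows from the explicit projection onto $\cV$ in the $v$-update that guarantees the strictly positive lower bound $\epsilon$. This lemma will then allow the $v$-subproblem update \eqref{eq:11} to be simplified in the subsequent analysis, since the coupling term $w^k \circ \Lambda^k$ reduces to the constant $\lambda_2 \mathbf{1}$ when evaluated at converged iterates.
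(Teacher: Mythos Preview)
Your proof is correct and follows essentially the same approach as the paper: both combine the first-order optimality condition of the $w$-subproblem with the multiplier update \eqref{eq:9}, and both rely on $v_i^{k+1}\ge\epsilon>0$ to justify the cancellation. The only cosmetic difference is that the paper factors out $v_i^{k+1}$ first and divides last, whereas you divide first and factor last.

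One small remark on your closing comment: the identity \eqref{eq:13} holds for \emph{every} iterate $k\ge 0$ (once $w^{k+1}$ and $\Lambda^{k+1}$ have been generated), not only at converged iterates; this is precisely why it can be substituted directly into the expression for $\tilde v^{k+1}$ in the subsequent remark.
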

\begin{proof}
Considering the update rule of $w$-subproblem in \eqref{eq:8} and multiplier update in \eqref{eq:9}, we have
\begin{displaymath}
\begin{aligned}
0&=\alpha(v^{k+1})^2\circ(w^{k+1})^2+(\Lambda^k\circ v^{k+1}-\alpha v^{k+1}\circ u^{k+1})\circ w^{k+1}-\lambda_2 v^{k+1}\\
&=v^{k+1}\circ w^{k+1}\circ(\Lambda^k+\alpha v^{k+1}\circ w^{k+1}-\alpha u^{k+1})-\lambda_2v^{k+1}\\
&=v^{k+1}\circ w^{k+1}\circ\Lambda^{k+1}-\lambda_2 v^{k+1}.\\
\end{aligned}
\end{displaymath}
Further due to $v_i^{k+1}\ge\epsilon>0\ \ \forall ~ i$, we can prove this lemma.
\end{proof}

\begin{rem}
\label{rem:1}
By Lemma \ref{le:1}, The equation \eqref{eq:11} can be  simplified below
\begin{displaymath}
\begin{aligned}
\tilde{v}^{k+1}&=\myatop{\mathbf{1}}{\lambda_1\mathbf{1}+\alpha(w^k)^2}\circ\left(\lambda_1 f+\lambda_2\ln w^k+\lambda_2-w^k\circ\Lambda^k+\alpha w^k\circ u^{k+1}\right),\\
&=\myatop{\mathbf{1}}{\lambda_1\mathbf{1}+\alpha(w^k)^2}\circ\left(\lambda_1 f+\lambda_2\ln w^k+\alpha w^k\circ u^{k+1}\right).
\end{aligned}
\end{displaymath}
Therefore,
\begin{equation}
\label{eq:14}
\begin{aligned}
v^{k+1}&=\max\left(\epsilon \mathbf 1,\tilde{v}^{k+1}\right),\\
&=\max\left(\epsilon \mathbf 1,\myatop{\mathbf{1}}{\lambda_1\mathbf{1}+\alpha(w^k)^2}\circ\left(\lambda_1 f+\lambda_2\ln w^k+\alpha w^k\circ u^{k+1}\right)\right).
\end{aligned}
\end{equation}
\end{rem}
}
Algorithm 1 summarizes the overall BCA algorithm.
\begin{algorithm}[H]
	\renewcommand{\algorithmicrequire}{\textbf{Input:}}
	\renewcommand{\algorithmicensure}{\textbf{Initialization:}}
	\caption{BCA}
	\label{alg:1}
	\begin{algorithmic}[1]
		\REQUIRE {Noisy data $f$ and parameters $\lambda_1$, $\lambda_2$, $\alpha$}\\
		\ENSURE {{ $u^{0}=f, v^{0}=f, w^0=\mathbf 1, \Lambda^{0}=\mathbf 0, k=0.$}}\\
		\WHILE{Stopping criteria is not satisfied}
		\STATE Solve  $u^{k+1}$ by \eqref{eq:10}
		\STATE Solve  $v^{k+1}$ by \eqref{eq:14}
		\STATE Solve  $w^{k+1}$ by \eqref{eq:12}
		\STATE 
		Update the multipliers by 
		\[
		\Lambda^{k+1}=\Lambda^{k}+\alpha(v^{k+1}\circ w^{k+1}-u^{k+1}).
		\]
		\STATE $k\leftarrow k+1.$
		\ENDWHILE
	\end{algorithmic}  
\end{algorithm}

\subsection{Convergence analysis of BCA}

Readily one knows that the reformulated optimization problem \eqref{eq:4}  cannot be interpreted as  a (two-block) problem (as reviewed in subsection \ref{secReviewADMM}), whose objective function is of sum of two functions without coupled variables. Due to the bilinear constraint and coupled term $v_i\ln w_i$ in the objective function, it does not also belong to the problems considered either  for convex optimization problem \cite{lin2015global,chen2016direct,deng2017parallel} with objective functions of sum of no less than three functions  or nonconvex optimization problem  \cite{Hajinezhad2018,doi:10.1137/18M1188446} with bilinear constraint. The current algorithm  introduces similar bilinear constraint as for the blind ptychography problem in \cite{doi:10.1137/18M1188446}. However, the proof technique for \cite{doi:10.1137/18M1188446} cannot directly apply to the current BCA, since the linear relation between the iterative multipliers and the auxiliary variable does not hold for BCA, and more specifically, their relation is bilinear  (See Lemma \ref{le:1}). Moreover, the coercivity of the objective function is not trivial. 
Therefore, one has to develop a new technique for convergence guarantee. 

To guarantee the sufficient decrease and boundedness of the iterative sequence, we make the following assumption. Although limited by current analysis technique we cannot remove it,  it can be verified numerically (See Fig. \ref{fig7} in the numerical part of this paper).
\begin{assumption}
\label{asum}
The iterative sequence $\{w^k\}$ generated by BCA algorithm has a uniformly positive lower bound, i.e., $w_i^k\ge c>0, \forall i$, where c is a positive constant which is independent to $k$.
\end{assumption}
\vskip .1in

\begin{lemma}
\label{le:2}
Let $T(x)=\frac{1}{2}\|Ax-b\|^{2}+M(x)$, with convex function $M$. Letting $x^*$ be a stationary point of $T(x)$ (also a global minimizer), i.e. $0\in\partial T(x^*)$, {where $\partial T(x)$ denotes the subdifferential of $T(x)$ in the convex analysis sense,} then we have
\begin{displaymath}
T(x)-T(x^*)\ge\|A(x-x^*)\|^{2}.
\end{displaymath}
\end{lemma}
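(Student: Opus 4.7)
The plan is to expand the quadratic data term around $x^*$ and then use the first-order optimality condition combined with convexity of $M$ to cancel the linear cross term. This is a classical strong-convexity-type lower bound for composite convex objectives where only the smooth quadratic part carries curvature through $A$.

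First I would write the algebraic identity
\begin{displaymath}
\tfrac{1}{2}\|Ax-b\|^{2} \;=\; \tfrac{1}{2}\|Ax^{*}-b\|^{2} + \langle A^{T}(Ax^{*}-b),\, x-x^{*}\rangle + \tfrac{1}{2}\|A(x-x^{*})\|^{2},
\end{displaymath}
which is just completing the square. Subtracting $T(x^{*})$ from $T(x)$ therefore gives
\begin{displaymath}
T(x)-T(x^{*}) \;=\; \tfrac{1}{2}\|A(x-x^{*})\|^{2} + \langle A^{T}(Ax^{*}-b),\, x-x^{*}\rangle + \bigl(M(x)-M(x^{*})\bigr).
\end{displaymath}

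Next I would invoke the stationarity hypothesis $0\in\partial T(x^{*})$. Since $\partial T(x^{*}) = A^{T}(Ax^{*}-b) + \partial M(x^{*})$ (the smooth quadratic gradient adds classically to the convex subdifferential of $M$), this yields $-A^{T}(Ax^{*}-b)\in \partial M(x^{*})$. Convexity of $M$ then supplies the subgradient inequality
\begin{displaymath}
M(x)-M(x^{*}) \;\ge\; \langle -A^{T}(Ax^{*}-b),\, x-x^{*}\rangle,
\end{displaymath}
and plugging this into the identity above cancels the cross term and leaves $T(x)-T(x^{*}) \ge \tfrac{1}{2}\|A(x-x^{*})\|^{2}$.

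The step I expect to be entirely routine. The only subtle point is bookkeeping of the multiplicative constant: the natural expansion above delivers the bound with a factor of $\tfrac{1}{2}$, which is consistent with the $\tfrac{1}{2}$ in the definition of $T$; the statement as written has no $\tfrac{1}{2}$, so I would double-check whether the authors intend the bound with factor $\tfrac{1}{2}$ (matching their $T$), or instead mean $T(x)=\|Ax-b\|^{2}+M(x)$ (in which case the identical argument goes through and yields exactly $\|A(x-x^{*})\|^{2}$). Either way, the proof scheme, namely completing the square plus one subgradient inequality from convexity of $M$, is unchanged, and no additional assumptions on $A$ (injectivity, full rank, etc.) are required.
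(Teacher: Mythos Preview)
Your proposal is correct and follows essentially the same route as the paper: use the subgradient inequality for $M$ at $x^{*}$ (derived from $0\in\nabla H(x^{*})+\partial M(x^{*})$ with $H(x)=\tfrac{1}{2}\|Ax-b\|^{2}$) and combine it with the quadratic expansion of $H$ to obtain $T(x)-T(x^{*})\ge\tfrac{1}{2}\|A(x-x^{*})\|^{2}$. Your observation about the constant is also on target: the paper's own proof in fact concludes with the factor $\tfrac{1}{2}$, so the missing $\tfrac{1}{2}$ in the displayed statement is a typo there as well.
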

\begin{proof}
Let $H(x)=\frac{1}{2}\|Ax-b\|^{2}$. Since $x^*$ is a stationary point, i.e. 
$
0 \in \nabla H(x^*)+\partial M(x^*),
$
readily one has
\begin{displaymath}
M(x)-M(x^*)\ge\langle -\nabla H(x^*),x-x^*\rangle\ \ \ \forall ~ x.
\end{displaymath}
Then we have 
\begin{displaymath}
\begin{aligned}
T(x)-T(x^*)\ge H(x)-H(x^*)-\langle \nabla H(x^*),x-x^*\rangle=\frac{1}{2}\|A(x-x^*)\|^2,
\end{aligned}
\end{displaymath}
that immediately concludes this lemma.
\end{proof}
\vskip .1in

In the following, within the framework in \cite{wang2015global,doi:10.1137/18M1188446,HongLuo,Lou2018,Mei2018,Hajinezhad2018} developed for the analysis of ADMM for nonconvex nonsmooth optimization problem, we will first prove that the iterative sequence satisfies the sufficient decrease condition. Then, the relative error condition for the iterative sequence will be derived. Finally one can derive the subsequence convergence of the proposed BCA. 

\begin{lemma}
\label{le:3}
Letting $(u^k,v^k,w^k,\Lambda^k)$ be the sequence generated by BCA in Algorithm 1, and $\alpha>\frac{\sqrt{2}\lambda_2}{c^2\epsilon} $, then under Assumption \ref{asum} we have
\begin{equation}
\label{eq:15}
\begin{aligned}
&\ \ \ \ L_{\alpha}(u^k,v^k,w^k,\Lambda^k)-L_{\alpha}(u^{k+1},v^{k+1},w^{k+1},\Lambda^{k+1})\ge\frac{\alpha}{2}\|u^{k+1}-u^{k}\|^2\\
&+\frac{\lambda_1}{2}\|v^{k+1}-v^{k}\|^2+\frac{\alpha}{2}\|w^k\circ(v^{k+1}-v^{k})\|^2+C_1\|v^{k+1}\circ(w^{k+1}-w^{k})\|^2,
\end{aligned}
\end{equation}
where $C_1$ is a positive constant which is independent to $k$.
\end{lemma}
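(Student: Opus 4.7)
The plan is to decompose the one-step decrease $L_\alpha(u^k,v^k,w^k,\Lambda^k)-L_\alpha(u^{k+1},v^{k+1},w^{k+1},\Lambda^{k+1})$ along the four Gauss--Seidel updates of BCA, apply Lemma \ref{le:2} to each of the three primal subproblems, and then absorb the (positive) change caused by the dual step \eqref{eq:9} into the primal descent obtained from the $w$-subproblem. The hypothesis $\alpha>\sqrt{2}\lambda_2/(c^2\epsilon)$ should enter exactly at the absorption step.

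For the $u$-step, the only $u$-dependent part of $L_\alpha$ is, after completing the square, of the form $\frac{\alpha}{2}\|u-b_u^k\|^2+\sum_i|\nabla u_i|$ for some vector $b_u^k$ independent of $u$; since $u^{k+1}$ is its minimizer and the nonsmooth term is convex, Lemma \ref{le:2} yields the first term $\frac{\alpha}{2}\|u^{k+1}-u^k\|^2$. For the $v$-step, I would rewrite the quadratic part as $\frac{1}{2}\|A_v v-b_v\|^2$ with $A_v$ stacking $\sqrt{\lambda_1}\,I$ over $\sqrt{\alpha}\,\mathrm{diag}(w^k)$ and absorb the linear-in-$v$ and indicator pieces into a convex $M$; applying Lemma \ref{le:2} with $x^*=v^{k+1}$ then gives $\frac{\lambda_1}{2}\|v^{k+1}-v^k\|^2+\frac{\alpha}{2}\|w^k\circ(v^{k+1}-v^k)\|^2$. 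For the $w$-step, I would take $\frac{1}{2}\|A_w w-b_w\|^2=\frac{\alpha}{2}\|v^{k+1}\circ w-(u^{k+1}-\Lambda^k/\alpha)\|^2$ and $M(w)=-\lambda_2\sum_i v_i^{k+1}\ln w_i$, which is convex since $v^{k+1}\geq\epsilon>0$; Lemma \ref{le:2} then produces a descent of at least $\frac{\alpha}{2}\|v^{k+1}\circ(w^{k+1}-w^k)\|^2$.

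The multiplier step does not decrease $L_\alpha$: a direct computation using \eqref{eq:9} gives
\begin{displaymath}
L_\alpha(u^{k+1},v^{k+1},w^{k+1},\Lambda^{k+1})-L_\alpha(u^{k+1},v^{k+1},w^{k+1},\Lambda^{k})=\tfrac{1}{\alpha}\|\Lambda^{k+1}-\Lambda^k\|^2.
\end{displaymath}
This is where Lemma \ref{le:1} is crucial: it implies $\Lambda^j=\lambda_2\mathbf 1/w^j$ for every $j\geq 1$, so
\begin{displaymath}
\Lambda^{k+1}-\Lambda^k=\lambda_2\,\frac{w^k-w^{k+1}}{w^{k+1}\circ w^k},
\end{displaymath}
and Assumption \ref{asum} then yields $\|\Lambda^{k+1}-\Lambda^k\|^2\leq \frac{\lambda_2^2}{c^4}\|w^{k+1}-w^k\|^2$. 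Using $v_i^{k+1}\geq\epsilon$ (enforced by the indicator in the $v$-update) one further has $\|w^{k+1}-w^k\|^2\leq \epsilon^{-2}\|v^{k+1}\circ(w^{k+1}-w^k)\|^2$, so the dual ascent is controlled by a fraction of the $w$-descent.

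Summing the four pieces and absorbing the ascent into the $w$-descent yields the claimed inequality with
\begin{displaymath}
C_1=\frac{\alpha}{2}-\frac{\lambda_2^2}{\alpha c^4\epsilon^2},
\end{displaymath}
and the hypothesis $\alpha>\sqrt{2}\lambda_2/(c^2\epsilon)$ is precisely what forces $C_1>0$. The main obstacle is the dual-ascent term: both positivity bounds ($w\geq c$ from Assumption \ref{asum} and $v\geq\epsilon$ from feasibility), together with the nonlinear multiplier identity in Lemma \ref{le:1}, must be deployed jointly to dominate this ascent by a portion of the $w$-descent. Everything else is a routine application of Lemma \ref{le:2} to the three primal subproblems, each of which is a strongly convex quadratic plus a convex (possibly nonsmooth) term whose minimizer is the next iterate.
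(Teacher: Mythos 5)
Your proposal is correct and follows essentially the same route as the paper: decompose the decrease across the three primal updates (each handled by Lemma \ref{le:2} with the quadratic part written as $\tfrac12\|A x-b\|^2$ plus a convex remainder), compute the dual ascent as $\tfrac{1}{\alpha}\|\Lambda^{k+1}-\Lambda^k\|^2$, and use Lemma \ref{le:1} together with $w^k\geq c$ and $v^{k+1}\geq\epsilon$ to dominate it by a fraction of the $w$-descent, yielding $C_1=\tfrac{\alpha}{2}-\tfrac{\lambda_2^2}{\alpha c^4\epsilon^2}>0$ exactly under the stated condition on $\alpha$. This matches the paper's argument, including the constant and where the hypothesis on $\alpha$ enters.
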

\vskip .1in

\begin{proof}
For $u$-subproblem, by Lemma \ref{le:2}, one readily has
\begin{equation}
\label{eq:16}
\begin{aligned}
L_{\alpha}(u^k,v^k,w^k,\Lambda^k)-L_{\alpha}(u^{k+1},v^{k},w^{k},\Lambda^{k})\ge\frac{\alpha}{2}\|u^{k+1}-u^k\|^2.
\end{aligned}
\end{equation}

Similarly using Lemma \ref{le:2}, for $v$-subproblem, one can obtain
\begin{equation}
\label{eq:17}
\begin{aligned}
 &L_{\alpha}(u^{k+1},v^k,w^k,\Lambda^k)-L_{\alpha}(u^{k+1},v^{k+1},w^{k},\Lambda^{k})\\
\ge&\frac{\lambda_1}{2}\|v^{k+1}-v^k\|^2+\frac{\alpha}{2}\|w^k\circ(v^{k+1}-v^k)\|^2.
\end{aligned}
\end{equation}

For $w$-subproblem, one gets
\begin{equation}
\label{eq:18}
\begin{aligned}
&\ \ \ \ L_{\alpha}(u^{k+1},v^{k+1},w^k,\Lambda^k)-L_{\alpha}(u^{k+1},v^{k+1},w^{k+1},\Lambda^{k})\\
&\ge\frac{\alpha}{2}\|v^{k+1}\circ(w^{k+1}-w^k)\|^2.
\end{aligned}
\end{equation}

By \eqref{eq:9}, Lemma \ref{le:1} and Assumption \ref{asum} one has
\begin{equation}
\label{eq:19}
\begin{aligned}
&\ \ \ \ L_{\alpha}(u^{k+1},v^{k+1},w^{k+1},\Lambda^k)-L_{\alpha}(u^{k+1},v^{k+1},w^{k+1},\Lambda^{k+1})\\
&=-\frac{1}{\alpha}\left\|\Lambda^{k+1}-\Lambda^k\right\|^2=-\frac{1}{\alpha}\left\|\myatop{\lambda_2\mathbf{1}}{w^{k+1}}-\myatop{\lambda_2\mathbf{1}}{w^{k}}\right\|^2\\
&=-\frac{\lambda_2^2}{\alpha}\left\|\myatop{v^{k+1}\circ(w^{k+1}-w^{k})}{v^{k+1}\circ w^{k+1}\circ w^k}\right\|^2\\
&\ge-\frac{\lambda_2^2}{\alpha c^4\epsilon^2}\left\|\myatop{v^{k+1}\circ(w^{k+1}-w^{k})}{w^{k+1}\circ w^k}\right\|^2.
\end{aligned}
\end{equation}
Since $\alpha>\frac{\sqrt{2}\lambda_2}{c^2\epsilon} $, further by \eqref{eq:16}-\eqref{eq:19}, one can conclude to this lemma.
\end{proof}
\vskip .1in
\begin{lemma}
\label{le:4}
Denote $\mathcal G:\Omega\rightarrow \mathbb R$ by
\[
\mathcal{G}(u,v,w)=\frac{\lambda_1}{2}\left\|f-v\right\|^2+\lambda_2\left\langle(\mathbf{1}-\myatop{\mathbf{1}}{w})\circ u-v\circ\ln w,\mathbf{1}\right\rangle+\frac{\alpha}{2}\left\|v\circ w-u\right\|^2,\]
with $\alpha>\lambda_2\left(\frac{1}{c}-1\right)^2$, 
where $\Omega:=\{(u,v,w)\mid  v_i\ge\epsilon>0, w_i\ge c>0\ \forall ~ i; u,v,w\in \mathbb{R}^n \}.$ 
If $\|(u,v,w)\|_{\Omega}:=\max\{\|u\|_{\infty},\|v\|_{\infty},\|w\|_{\infty}\}\to+\infty$, then we have $\mathcal{G}(u,v,w)\to+\infty$.
\end{lemma}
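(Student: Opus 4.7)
The plan is to eliminate $u$ by completing the square, reducing the coercivity of $\mathcal{G}$ on $\Omega$ to (a) coercivity of a reduced function $\Phi(v,w)$ in the $(v,w)$ directions and (b) a trivial statement for the residual $u$--direction.

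First I would observe that $\mathcal{G}$ is a strongly convex quadratic in $u$ for each fixed $(v,w)$, and a direct completion of the square yields
\[
\mathcal{G}(u,v,w) = \Phi(v,w) + \frac{\alpha}{2}\bigl\|u - u^{*}(v,w)\bigr\|^{2},\quad u^{*}(v,w) := v\circ w - \tfrac{\lambda_{2}}{\alpha}\bigl(\mathbf{1} - \mathbf{1}/w\bigr),
\]
where
\[
\Phi(v,w) = \frac{\lambda_{1}}{2}\|f-v\|^{2} + \lambda_{2}\sum_{i} v_{i}\bigl(w_{i} - 1 - \ln w_{i}\bigr) - \frac{\lambda_{2}^{2}}{2\alpha}\sum_{i}(1 - 1/w_{i})^{2}.
\]
Two elementary facts then drive the rest. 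First, the Bregman--type inequality $x - 1 - \ln x \ge 0$ for $x > 0$, with equality only at $x = 1$, forces the middle sum in $\Phi$ to be nonnegative on $\Omega$. Second, $(1 - 1/w_{i})^{2} \le \max\{(1/c - 1)^{2},\, 1\}$ since $w_{i} \ge c$, so the subtracted sum is bounded below by a fixed constant $-K$ depending only on $n,\lambda_{2},\alpha,c$. In particular $\Phi(v,w) \ge \tfrac{\lambda_{1}}{2}\|f-v\|^{2} - K$ on $\Omega$.

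Next I would take any sequence $(u^{n},v^{n},w^{n}) \in \Omega$ with $\|(u^{n},v^{n},w^{n})\|_{\Omega} \to \infty$ and pass to a subsequence along which one of the three sup--norms realizes the divergence. In the case $\|v^{n}\|_{\infty} \to \infty$, the bound $\|f-v^{n}\|^{2} \ge (\|v^{n}\|_{\infty} - \|f\|_{\infty})^{2}$ forces $\Phi \to \infty$, hence $\mathcal{G} \to \infty$. In the case where $\|v^{n}\|_{\infty}$ stays bounded but $\|w^{n}\|_{\infty} \to \infty$, a pigeonhole argument gives a fixed index $i_{0}$ with $w^{n}_{i_{0}} = \|w^{n}\|_{\infty} \to \infty$ along a further subsequence; since $v^{n}_{i_{0}} \ge \epsilon$ and $w - 1 - \ln w \to +\infty$, the $i_{0}$--th summand of the middle term in $\Phi$ diverges while the rest remains bounded below, so again $\mathcal{G} \to \infty$. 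Finally, if $\|v^{n}\|_{\infty}$ and $\|w^{n}\|_{\infty}$ are both bounded while $\|u^{n}\|_{\infty} \to \infty$, then $w^{n}_{i} \ge c$ together with the uniform upper bounds keeps $u^{*}(v^{n},w^{n})$ inside a bounded set, so $\|u^{n} - u^{*}(v^{n},w^{n})\|^{2} \ge (\|u^{n}\|_{\infty} - \|u^{*}\|_{\infty})^{2} \to \infty$; meanwhile $\Phi(v^{n},w^{n})$ is bounded over the compact range of $(v^{n},w^{n})$, so once more $\mathcal{G} \to \infty$.

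The main technical subtlety is the cross term $\lambda_{2}\sum_{i}(1 - 1/w_{i})u_{i}$ in $\mathcal{G}$: it has no definite sign and grows only linearly in $\|u\|$, so a priori it could try to cancel the quadratic penalty $\tfrac{\alpha}{2}\|v\circ w - u\|^{2}$ along certain directions and spoil coercivity in $u$. Completing the square in $u$ is the clean device that absorbs this interaction into $\tfrac{\alpha}{2}\|u - u^{*}\|^{2}$ at the price of the correction $-\tfrac{\lambda_{2}^{2}}{2\alpha}\sum(1 - 1/w_{i})^{2}$; the lower bound $w_{i} \ge c > 0$ together with the hypothesis $\alpha > \lambda_{2}(1/c-1)^{2}$ is exactly what keeps this correction bounded and dominated by the nonnegative Bregman contribution $\lambda_{2}\sum v_{i}(w_{i} - 1 - \ln w_{i})$ used in case~(ii).
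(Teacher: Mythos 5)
Your proof is correct, and it takes a genuinely different route from the paper's. The paper never isolates the $u$-dependence: it first bounds $-\ln w_i\ge -w_i$, rewrites the linear cross term as $(\mathbf{1}-\mathbf{1}/w)\circ(u-v\circ w)-v$, and absorbs the product $\vert 1-1/w_i\vert\,\vert u_i-v_iw_i\vert$ into the quadratic penalty $\frac{\alpha}{2}\Vert v\circ w-u\Vert^2$ by a Young-type inequality --- this is exactly where the hypothesis $\alpha>\lambda_2(1/c-1)^2$ is consumed --- and then splits into the cases ``$\Vert v\Vert_\infty$ or $\Vert v\circ w-u\Vert_\infty$ unbounded'' versus ``both bounded while $u,w$ diverge,'' the latter requiring the pointwise estimate $u_i\ge \epsilon w_i-C_3$ to show that $\lambda_2(1-1/w_i)u_i-v_i\ln w_i$ blows up. Your exact completion of the square in $u$ (which I checked: $u^*=v\circ w-\tfrac{\lambda_2}{\alpha}(\mathbf{1}-\mathbf{1}/w)$ and your $\Phi$ are right) absorbs the cross term once and for all into $\frac{\alpha}{2}\Vert u-u^*\Vert^2$, and the $w$-coercivity then falls out of the Bregman inequality $w-1-\ln w\ge 0$ together with $v_i\ge\epsilon$, with no need to track the residual $v\circ w-u$ at all. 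A payoff you actually undersell in your closing remark: your argument does not use $\alpha>\lambda_2(1/c-1)^2$ anywhere. The correction $-\tfrac{\lambda_2^2}{2\alpha}\sum_i(1-1/w_i)^2$ is bounded below by a constant for \emph{every} $\alpha>0$, solely because $w_i\ge c$ gives $(1-1/w_i)^2\le\max\{(1/c-1)^2,1\}$ (note your $\max$ with $1$ is genuinely needed here, since $1-1/w_i$ approaches $1$ as $w_i\to\infty$); so your proof establishes the lemma for all $\alpha>0$, which is strictly stronger than the stated version. Two routine points to spell out in a write-up: the trichotomy of cases requires first extracting a subsequence on which each of the three sup-norms either stays bounded or diverges, and the final passage from ``every subsequence admits a further subsequence along which $\mathcal{G}\to+\infty$'' to ``$\mathcal{G}\to+\infty$'' is the standard contradiction argument.
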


\begin{proof}
For all $(u, v, w)\in \Omega$, one readily has 
\begin{equation}
\label{eq:20}
\begin{aligned}
&\ \ \ \ \mathcal{G}(u,v,w)\geq \frac{\lambda_1}{2}\|f-v\|^2+\lambda_2\left\langle(\mathbf{1}-\myatop{\mathbf{1}}{w})\circ u-v\circ w,\mathbf{1}\right\rangle+\frac{\alpha}{2}\|v\circ w-u\|^2\\
& = \frac{\lambda_1}{2}\|f-v\|^2+\lambda_2\left\langle(\mathbf{1}-\myatop{\mathbf{1}}{w})\circ (u-v\circ w)-v,\mathbf{1}\right\rangle+\frac{\alpha}{2}\|v\circ w-u\|^2\\
&\ge \frac{\lambda_1}{2}\|f-v\|^2-\lambda_2\left\langle\vert \mathbf{1}-\myatop{\mathbf{1}}{w}\vert \circ \vert u-v\circ w\vert +v,\mathbf{1}\right\rangle+\frac{\alpha}{2}\|v\circ w-u\|^2\\
&= \frac{\lambda_1}{2}\|f-v\|^2-\lambda_2\langle v,\mathbf{1}\rangle+\frac{\lambda_2}{2}\left\|\vert \mathbf{1}-\myatop{\mathbf{1}}{w}\vert \circ \vert u-v\circ w\vert -\mathbf{1}\right\|^2\\
&\qquad \quad -\frac{\lambda_2}{2}\left\|\vert \mathbf{1}-\myatop{\mathbf{1}}{w}\vert \circ \vert u-v\circ w\vert \right\|^2-\frac{\lambda_2}{2}\|\mathbf{1}\|^2+\frac{\alpha}{2}\|v\circ w-u\|^2\\
&\stackrel{\mathrm{Assumption~} \ref{asum}}{ \geq} \frac{\lambda_1}{2}\|f-v\|^2-\lambda_2\langle v,\mathbf{1}\rangle+\frac{\lambda_2}{2}\left\|\vert \mathbf{1}-\myatop{\mathbf{1}}{w}\vert \circ \vert u-v\circ w\vert -\mathbf{1}\right\|^2\\
&\qquad\quad -\frac{\lambda_2}{2}\left(\frac{1}{c}-1\right)^2\| u-v\circ w \|^2-\frac{\lambda_2 n}{2}+\frac{\alpha}{2}\|v\circ w-u\|^2\\
&=\frac{\lambda_1}{2}\|f-v\|^2-\lambda_2\langle v,\mathbf{1}\rangle+\frac{\lambda_2}{2}\left\|\vert \mathbf{1}-\myatop{\mathbf{1}}{w}\vert \circ \vert u-v\circ w\vert -\mathbf{1}\right\|^2\\
&\qquad \quad +\left[\frac{\alpha}{2}-\frac{\lambda_2}{2}\left(\frac{1}{c}-1\right)^2\right]\left\|v\circ w-u\right\|^2-\frac{\lambda_2 n}{2},
\end{aligned}
\end{equation}
where the first inequality is derived by  $-\ln w_i\ge -w_i$ if $w_i>0$. 

In the following part, we consider the following two cases for $\|(u,v,w)\|_\Omega\rightarrow +\infty$.\\
\noindent\textbf{Case 1:} $\|v\|_{\infty}\to +\infty$ or $\|v\circ w- u\|_{\infty}\to+\infty$. Since $\alpha>\lambda_2(\frac{1}{c}-1)^2$, one can readily get that $\mathcal{G}(u,v,w)\to+\infty$. 
\newline
\textbf{Case 2:} There exists two constants $C_2, C_3>0$, such that $\|v\|_{\infty}\le C_2<+\infty$, $\|u\|_{\infty}\to+\infty$, $\|w\|_{\infty}\to+\infty$, and $\|v\circ w- u\|_{\infty}\le C_3<+\infty$. Then we have
\begin{equation}
\label{eq:21}
\begin{aligned}
\mathcal{G}(u,v,w)=\sum\limits_i[\frac{\lambda_1}{2}(f_i-v_i)^2+\lambda_2(1-\frac{1}{w_i})u_i-v_i\ln w_i+\frac{\alpha}{2}(v_iw_i-u_i)^2 ].
\end{aligned}
\end{equation}
There must exist some $i$ where $u_i\to+\infty$, $w_i\to+\infty$, and $\vert v_i w_i-u_i\vert\le C_3$. Thus, we have
\begin{equation*}
\begin{aligned}
&\epsilon w_i-u_i\le\vert v_i w_i- u_i\vert\le C_3.
\end{aligned}
\end{equation*}
Then we get the lower bound estimate of $u_i$ as 
\begin{equation}
\label{eq:22}
\begin{aligned}
u_i\ge \epsilon w_i+C_3.
\end{aligned}
\end{equation}
Therefore,
\begin{equation}
\label{eq:23}
\begin{aligned}
&\ \ \ \ \frac{\lambda_1}{2}(f_i-v_i)^2+\lambda_2(1-\frac{1}{w_i})u_i-v_i\ln w_i+\frac{\alpha}{2}(v_iw_i-u_i)^2\\
&\ge \lambda_2(1-\frac{1}{w_i})u_i-v_i\ln w_i\\
&\ge \lambda_2(1-\frac{1}{w_i})(\epsilon w_i+C_3)-C_2\ln w_i\\
&=\lambda_2\epsilon w_i-\lambda_2\epsilon+\lambda_2C_3-\frac{\lambda_2 C_3}{w_i}.
\end{aligned}
\end{equation}
Since $\lim\limits_{w\to+\infty}\frac{\lambda_2\epsilon w-\lambda_2\epsilon+\lambda_2C_3-\frac{\lambda_2 C_3}{w}}{w}=\lambda_2\epsilon>0$, we can readily get $(\lambda_2\epsilon w-\lambda_2\epsilon+\lambda_2C_3-\frac{\lambda_2 C_3}{w})\to+\infty$ as $w\to+\infty$. Thus, we can derive that 
\[
\frac{\lambda_1}{2}(f_i-v_i)^2+\lambda_2(1-\frac{1}{w_i})u_i-v_i\ln w_i+\frac{\alpha}{2}(v_iw_i-u_i)^2\to+\infty.
\]

If the variables $u_j, v_j, w_j$ do not  satisfy the  above two cases, they cannot tend to the infinity. Hence, in summary, we can conclude that $\mathcal{G}(u,v,w)\to+\infty$ as $\|(u, v, w)\|_\Omega\to +\infty$.
\end{proof}
\vskip .1in

\begin{thm}
Letting $\alpha>\max(\frac{\sqrt{2}\lambda_2}{c^2\epsilon},\lambda_2(\frac{1}{c}-1)^2) $, under Assumption \ref{asum}, we have
\begin{itemize}
\item [(1)]The sequence $(u^k,v^k,w^k,\Lambda^k)$ generated by proposed BCA is bounded and has at least one limit point.
\item [(2)]The successive errors $u^{k+1}-u^k\to0$, $v^{k+1}-v^k\to0$, $w^{k+1}-w^k\to0$, and $\Lambda^{k+1}-\Lambda^k\to0$ as $k\to +\infty$.
\item [(3)]Each limit point $(u^*,v^*,w^*,\Lambda^*)$ is a stationary point of $L_{\alpha}(u,v,w,\Lambda)$, and $(u^*,v^*)$ is a stationary point of $H(u,v)$.
\end{itemize}
\end{thm}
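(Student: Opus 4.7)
The plan is to follow the now-standard three-step route for proving subsequence convergence of ADMM on nonconvex problems: show that $\{L_\alpha(u^k,v^k,w^k,\Lambda^k)\}$ is nonincreasing and bounded below; extract summability of the successive differences; and finally pass to the limit in the first-order optimality conditions of the three subproblems. The ingredients are already in place: Lemma \ref{le:3} provides the sufficient decrease, Lemma \ref{le:1} provides the identity $\Lambda^{k+1}\circ w^{k+1}=\lambda_2\mathbf{1}$ that lets us eliminate the multiplier from $L_\alpha$, and Lemma \ref{le:4} supplies coercivity of the reduced functional $\mathcal{G}$.

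For part (1), the first move is to substitute $\Lambda^{k+1}=\lambda_2\mathbf{1}/w^{k+1}$ from Lemma \ref{le:1} into \eqref{eq:5} and simplify, obtaining for all $k\geq 1$ the clean identity
\begin{equation*}
L_{\alpha}(u^{k+1},v^{k+1},w^{k+1},\Lambda^{k+1}) \;=\; \mathcal{G}(u^{k+1},v^{k+1},w^{k+1})+\sum_i|\nabla u_i^{k+1}|+\chi_{\cV}(v^{k+1}).
\end{equation*}
Since $v^{k+1}\in\cV$ by construction and the TV term is nonnegative, this gives $L_\alpha^{k+1}\geq \mathcal{G}(u^{k+1},v^{k+1},w^{k+1})$. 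Combined with the monotone decrease from Lemma \ref{le:3}, the sequence $\{L_\alpha^k\}$ is nonincreasing and bounded above by $L_\alpha^1$, hence $\{\mathcal{G}(u^{k+1},v^{k+1},w^{k+1})\}$ is bounded above as well. The coercivity of $\mathcal{G}$ on $\Omega$ (Lemma \ref{le:4}), together with $v_i^k\geq\epsilon$ and Assumption \ref{asum}, then forces $\{(u^k,v^k,w^k)\}$ to be bounded. Boundedness of $\{\Lambda^k\}$ follows at once from $\Lambda^{k+1}=\lambda_2\mathbf{1}/w^{k+1}$ and the lower bound $w^{k+1}\geq c>0$. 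Bolzano--Weierstrass then delivers at least one limit point.

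Part (2) is a telescoping argument. Since $\{L_\alpha^k\}$ is nonincreasing and now bounded below, it converges, so summing the inequality of Lemma \ref{le:3} over $k$ yields
\begin{equation*}
\sum_{k\geq 1}\|u^{k+1}-u^k\|^2<\infty,\quad \sum_{k\geq 1}\|v^{k+1}-v^k\|^2<\infty,\quad \sum_{k\geq 1}\|v^{k+1}\circ(w^{k+1}-w^k)\|^2<\infty.
\end{equation*}
The first two terms immediately give $u^{k+1}-u^k\to 0$ and $v^{k+1}-v^k\to 0$, and dividing by $v^{k+1}_i\geq\epsilon>0$ in the third yields $w^{k+1}-w^k\to 0$. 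For the multiplier, Lemma \ref{le:1} gives $\Lambda^{k+1}-\Lambda^k=\lambda_2(\mathbf{1}/w^{k+1}-\mathbf{1}/w^k)$, which tends to zero since $w\mapsto 1/w$ is Lipschitz on $[c,+\infty)$. As a bonus, $\Lambda^{k+1}-\Lambda^k=\alpha(v^{k+1}\circ w^{k+1}-u^{k+1})\to 0$ shows that any limit point $(u^*,v^*,w^*)$ satisfies the bilinear constraint $v^*\circ w^*=u^*$.

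Part (3) is obtained by letting $k\to\infty$ along a convergent subsequence in the first-order optimality conditions for \eqref{eq:6}--\eqref{eq:8}. For example, the $u$-inclusion reads $0\in\lambda_2\mathbf{1}+\partial(\sum_i|\nabla u_i^{k+1}|)-\Lambda^k-\alpha(v^k\circ w^k-u^{k+1})$; rewriting the last two terms as $-\Lambda^{k+1}-\alpha(v^k\circ w^k-v^{k+1}\circ w^{k+1})$ via the multiplier update and invoking part (2), the drift term vanishes, and the graph closedness of the convex subdifferential of the discrete TV functional yields the $u$-stationarity of $L_\alpha$ at $(u^*,v^*,w^*,\Lambda^*)$. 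The analogous passage applies to the smooth-plus-indicator $v$-inclusion (using continuity of $\ln w$ on $[c,+\infty)$ and graph closedness of $\partial\chi_{\cV}$) and to the smooth $w$-equation. Finally, to conclude that $(u^*,v^*)$ is stationary for $H$, I would substitute $w^*=u^*/v^*$ and $\Lambda^*=\lambda_2 v^*/u^*$ (both valid at the limit) into the $L_\alpha$-stationarity conditions: the $u$-inclusion collapses to $0\in\lambda_2(\mathbf{1}-v^*/u^*)+\partial\sum_i|\nabla u_i^*|$, and the $v$-inclusion to $0\in-\lambda_1(f-v^*)-\lambda_2\ln(u^*/v^*)+\partial\chi_{\cV}(v^*)$, which are exactly the optimality conditions of $H$. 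The main obstacle I anticipate is the clean limit passage in the $u$-subproblem, since the TV subdifferential is genuinely set-valued and one must invoke graph closedness of $\partial(\sum_i|\nabla u_i|)$ together with the vanishing perturbations from part (2); a minor bookkeeping detail is that the identity $L_\alpha=\mathcal{G}+\text{TV}+\chi_{\cV}$ is only available for $k\geq 1$ since Lemma \ref{le:1} requires one multiplier update, but this merely shifts the descent analysis to start at $k=1$.
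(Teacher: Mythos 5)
Your proposal is correct and follows essentially the same route as the paper: sufficient decrease (Lemma \ref{le:3}) plus the identity from Lemma \ref{le:1} to reduce $L_\alpha$ to the coercive functional $\mathcal{G}$ of Lemma \ref{le:4}, telescoping to get vanishing successive differences, and passage to the limit in the subproblem optimality conditions. Your explicit identity $L_\alpha^{k+1}=\mathcal{G}^{k+1}+\sum_i|\nabla u_i^{k+1}|+\chi_{\cV}(v^{k+1})$ is exactly the paper's inequality \eqref{eq:25} made precise, and your closing substitution $\Lambda^*=\lambda_2\mathbf{1}/w^*$, $w^*=u^*/v^*$ recovering the stationarity conditions of $H$ matches the paper's final step.
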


\begin{proof}
(1) If $\alpha>\frac{\sqrt{2}\lambda_2}{c^2\epsilon}$, by Lemma \ref{le:3}, we get
\begin{equation}
\label{eq:24}
\begin{aligned}
&\ \ \ \ L_{\alpha}(u^k,v^k,w^k,\Lambda^k)-L_{\alpha}(u^{k+1},v^{k+1},w^{k+1},\Lambda^{k+1})\\
&\ge\frac{\alpha}{2}\|u^{k+1}-u^{k}\|^2+\frac{\lambda_1}{2}\|v^{k+1}-v^{k}\|^2+\frac{\alpha}{2}\|w^k\circ(v^{k+1}-v^{k})\|^2\\
&\hskip 5.5cm +C_1\|v^{k+1}\circ(w^{k+1}-w^{k})\|^2\\
&\ge\frac{\alpha}{2}\|u^{k+1}-u^{k}\|^2+\frac{\lambda_1+\alpha c^2}{2}\|v^{k+1}-v^{k}\|^2+C_1\epsilon^2\|w^{k+1}-w^{k}\|^2.
\end{aligned}
\end{equation}

Next, we will show that $L_{\alpha}(u^k,v^k,w^k,\Lambda^k)$ is lower bounded.
Readily one knows that 
\begin{equation}
\label{eq:25}
\begin{aligned}
&\ \ \ \ L_{\alpha}(u^k,v^k,w^k,\Lambda^k)\\
&\ge \frac{\lambda_{1}}{2}\left\|f-v^{k}\right\|^{2}+\lambda_{2}\left\langle (\mathbf{1}-\myatop{\mathbf{1}}{w^k})\circ u^k- v^{k} \circ \ln w^{k}, \mathbf{1}\right\rangle+\frac{\alpha}{2}\left\|v^{k} \circ w^{k}-u^{k}\right\|^{2}\\
&=\mathcal{G}(u^k,v^k,w^k).
\end{aligned}
\end{equation}
Then, following  \eqref{eq:25} and Lemma \ref{le:4},  the sequences $\{u^k\}, \{v^k\}, \{w^k\}$ and $\{\mathcal{G}(u^k,v^k,w^k)\}$ are all bounded as well as the boundedness of $\{\Lambda^k\}$ due to Lemma \ref{le:1}. 

Due to the boundedness of $(u^k,v^k,w^k,\Lambda^k)$, there exists a convergent subsequence $(u^{k_i},v^{k_i},w^{k_i};\Lambda^{k_i})$, i.e., $(u^{k_i},v^{k_i},w^{k_i},\Lambda^{k_i})\to(u^{*},v^{*},w^{*},\Lambda^{*})$.
\par
(2)\ By \eqref{eq:25}, one readily knows that the sequence $L_{\alpha}(u^k,v^k,w^k,\Lambda^k)$ is bounded below. Therefore, further by summing up \eqref{eq:24} from $k=1$ to $\infty$ implies that 
\[\sum_{k=1}^{\infty}\left\|u^{k+1}-u^{k}\right\|^{2}+\left\|v^{k+1}-v^{k}\right\|^{2}+\left\|w^{k+1}-w^{k}\right\|^{2}<\infty.\]
That immediately implies that $u^{k+1}-u^{k} \rightarrow 0$, $v^{k+1}-v^{k} \rightarrow 0$, $w^{k+1}-w^{k} \rightarrow 0$. By Lemma \ref{le:1}, one can also knows  that $\Lambda^{k+1}-\Lambda^{k} \rightarrow 0$.

(3) It follows from the optimality condition of $u$-subproblem that there exists $q\in\partial\sum\limits_i\vert\nabla u_i^{k+1}\vert$ such that
\begin{displaymath}
q+\lambda_2\mathbf 1-\Lambda^k-\alpha(v^k\circ w^k-u^{k+1})=0.
\end{displaymath}
Letting $p=q+\lambda_2-\Lambda^{k+1}-\alpha(v^{k+1}\circ w^{k+1}-u^{k+1})\in\partial_u L_{\alpha}(u^{k+1},v^{k+1},w^{k+1};\Lambda^{k+1})$, then we have
\begin{equation}
\label{eq:26}
\begin{aligned}
\|p\|&=\|q+\lambda_2\mathbf 1-\Lambda^{k+1}-\alpha(v^{k+1}\circ w^{k+1}-u^{k+1})\|\\
&=\|\Lambda^k-\Lambda^{k+1}+\alpha(v^k\circ w^k-v^{k+1}\circ w^{k+1})\|\\
&\le \|\Lambda^{k+1}-\Lambda^k\|+\alpha\|w^k\circ(v^{k+1}-v^{k})\|+\alpha\|v^{k+1}\circ(w^{k+1}-w^k)\|.
\end{aligned}
\end{equation}

The optimality condition of $v$-subproblem implies that there exists $q_1\in\partial \chi_{\mathcal{V}}(v^{k+1})$ such that
\begin{displaymath}
q_1+\lambda_1 v^{k+1}+\alpha(w^k)^2\circ v^{k+1}-\lambda_1 f-\lambda_2\ln w^k-\lambda_2+w^k\circ \Lambda^k-\alpha w^k\circ u^{k+1}  =0.
\end{displaymath}
Letting $p_1=q_1+\lambda_1 v^{k+1}+\alpha(w^{k+1})^2\circ v^{k+1}-\lambda_1 f-\lambda_2\ln w^{k+1}-\lambda_2+w^{k+1}\circ \Lambda^{k+1}-\alpha w^{k+1}\circ u^{k+1}\in\partial_u L_{\alpha}(u^{k+1},v^{k+1},w^{k+1};\Lambda^{k+1})$, then we have
\begin{equation}
\label{eq:27}
\begin{aligned}
&\qquad \|p_1\|\\
&=\|q_1+\lambda_1 v^{k+1}+\alpha(w^{k+1})^2\circ v^{k+1}-\lambda_1 f-\lambda_2\ln w^{k+1}-\lambda_2\\
&\hskip 6cm +w^{k+1}\circ \Lambda^{k+1}-\alpha w^{k+1}\circ u^{k+1}\|\\
&=\|\alpha v^{k+1}\circ[(w^{k+1})^2-(w^k)^2]-\lambda_2(\ln w^{k+1}-\ln w^k)+\Lambda^{k+1}\circ w^{k+1}\\
&\hskip 5.85cm-\Lambda^k\circ w^k-\alpha u^{k+1}(w^{k+1}-w^k)\|\\
&=\|\alpha v^{k+1}\circ(w^{k+1}+w^k)\circ(w^{k+1}-w^k)-\lambda_2(\ln w^{k+1}-\ln w^k)\\
&\hskip 7.1cm -\alpha u^{k+1}\circ(w^{k+1}-w^k)\|\\
&=\|(\Lambda^{k+1}-\Lambda^k)\circ(w^{k+1}-w^k)+\alpha v^{k+1}\circ w^k\circ(w^{k+1}-w^k)\\
&\hskip 7.4cm-\lambda_2(\ln w^{k+1}-\ln w^k)\|\\
&\le \|(\Lambda^{k+1}-\Lambda^k)\|\|(w^{k+1}-w^k)\|+\alpha\|v^{k+1}\circ w^k\circ(w^{k+1}-w^k)\|\\
&\hskip 7.45cm+\lambda_2\|\ln w^{k+1}-\ln w^k)\|.
\end{aligned}
\end{equation}
By the optimality condition of $w$-subproblem and \eqref{eq:9}, we have 
\begin{equation}
\label{eq:28}
\begin{aligned}
&\ \ \ \ \|\nabla_{w}L_{\alpha}\left(u^{k+1}, v^{k+1}, w^{k+1}; \Lambda^{k+1}\right)\|\\
&=\|\alpha(v^{k+1})^2\circ(w^{k+1})^2+(\Lambda^{k+1}\circ v^{k+1}-\alpha v^{k+1}\circ u^{k+1})\circ w^{k+1}-\lambda_2 v^{k+1}\|\\
&=\|v^{k+1}\circ w^{k+1}\circ(\Lambda^{k+1}-\Lambda^k)\|,
\end{aligned}
\end{equation}
and
\begin{equation}
\label{eq:29}
\begin{aligned}
&\ \ \ \ \|\nabla_{\Lambda}L_{\alpha}\left(u^{k+1}, v^{k+1}, w^{k+1}; \Lambda^{k+1}\right)\|\\
&=\|v^{k+1}\circ w^{k+1}-u^{k+1}\|=\frac{1}{\alpha}\|\Lambda^{k+1}-\Lambda^k\|.
\end{aligned}
\end{equation}
Finally, \eqref{eq:26}-\eqref{eq:29} and Item (1) in this theorem suggest that $(u^{*},v^{*},w^{*};\Lambda^{*})$ is a stationary point of $L_{\alpha}(u,v,w,\Lambda)$. Since $(u^{*},v^{*},w^{*};\Lambda^{*})$ is a stationary point, we have $u^*=v^* w^*$ from \eqref{eq:29}, then \eqref{eq:26} and \eqref{eq:27} imply that $0\in\partial_u H(u^*,v^*)$ and $0\in\partial_v H(u^*,v^*)$, i.e., $(u^*, v^*)$ is a stationary point of $H(u,v)$.
\end{proof}

{
We remark that in order to prove the theoretical convergence of the proposed BCA algorithm, we assume that $\epsilon$ is a positive constant.  Simulation results reported in the experimental section of this paper will not be affected if $\epsilon$ is selected appropriately. As for the case $\epsilon=0$, we will investigate the theoretical convergence in the future. 
}

\subsection{BCA$_f$}
The proposed BCA algorithm has a subproblem in {\color{red}\eqref{eq:10}} w.r.t. total variation minimization problem, which requires inner loop. To get a fully splitting scheme, we propose the following BCA$_f$ algorithm. 

We introduce one more auxiliary variable $p$ satisfying the  constraint $p_i=\nabla u_i$ ($p\in\mathbb R^{n,2}$ with $p_i\in\mathbb R^2$ as its $i^{th}$ row), in additional to the constraint $u_i=v_i w_i$ in proposed BCA, and then rewrite \eqref{eq:2} as the following equivalent constrained optimization problem:
\begin{equation}
\label{eq:30}
\begin{aligned}
&\min\limits_{(u,v)}\left\{ \frac{\lambda_{1}}{2}\sum\limits_{i}(f_{i}-v_{i})^{2}+\sum\limits_{i}| p_{i}|+\chi_{\cV}(v)+\lambda_{2}\sum\limits_{i}(u_i-v_i\ln w_i-v_i)\right\},\\
&s.t.\ \ \  u_i=v_i w_i, \ \ \  \ p_i=\nabla u_{i}.
\end{aligned}
\end{equation}
Then we can easily get the augmented Lagrangian of the above constrained optimization problem by introducing the multipliers $\Lambda_w$ and $\Lambda_p$:
\begin{equation}
\label{eq:31}
\begin{aligned}
\!\!\!\!&L_{\alpha_w,\alpha_p}(u,v,w,p,\Lambda_w,\Lambda_p)= \frac{\lambda_{1}}{2}\sum\limits_{i}(f_{i}-v_{i})^{2}+\sum\limits_{i}| p_{i}|\\
&+\lambda_{2}\sum\limits_{i}(u_i-v_i\ln w_i-v_i)+\langle \Lambda_{w},v\circ w-u\rangle+\langle \Lambda_{p},p-\nabla u\rangle\\
&+\frac{\alpha_w}{2}\| v\circ w-u\|^2+\frac{\alpha_p}{2}\| p-\nabla u\|^2+\chi_{\cV}(v),
\end{aligned}
\end{equation}
where  $\alpha_w>0$ and $\alpha_p>0$ are the penalization parameters.

Given the previous iterative solution $(u^k,v^k,w^k,p^k,\Lambda_w^k,\Lambda_p^k)$, the ADMM consists of the following iterations
\begin{subequations}
\begin{numcases}{}
u^{k+1}=\argmin_{u}L_{\alpha_w,\alpha_p}(u,v^k,w^k,p^k,\Lambda_w^k,\Lambda_p^k),\label{eq:32a}\\
v^{k+1}=\argmin_v L_{\alpha_w,\alpha_p}(u^{k+1},v,w^k,p^k,\Lambda_w^k,\Lambda_p^k),\label{eq:32b}\\
w^{k+1}=\argmin_w L_{\alpha_w,\alpha_p}(u^{k+1},v^{k+1},w,p^k,\Lambda_w^k,\Lambda_p^k),\label{eq:32c}\\
p^{k+1}=\argmin_p L_{\alpha_w,\alpha_p}(u^{k+1},v^{k+1},w^{k+1},p,\Lambda_w^k,\Lambda_p^k),\label{eq:32d}\\
\Lambda_w^{k+1}=\Lambda_w^k+\alpha_w(v^{k+1}\circ w^{k+1}-u^{k+1}),\label{eq:32e}\\
\Lambda_p^{k+1}=\Lambda_p^k+\alpha_p(p^{k+1}-\nabla u^{k+1}).\label{eq:32f}
\end{numcases}
\end{subequations}

We will show how to solve the subproblems w.r.t. $(u,v,w,p)$ in the rest of this part. 
We consider the $u$-subproblem below:
\begin{displaymath}
\begin{aligned}
u^{k+1}=\arg\min_{u}\left\{\lambda_2\sum\limits_i u_i+\frac{\alpha_{w}}{2}\left\| v^k\circ w^k+\frac{\Lambda_w^k}{\alpha_w}-u\right\|^{2}+\frac{\alpha_{p}}{2}\left\| p^{k}+\frac{\Lambda_p^k}{\alpha_p}-\nabla u\right\|^{2}\right\}.
\end{aligned}
\end{displaymath}
The first-order optimality condition of this subproblem is directly given below:
\begin{equation}
\label{eq:32}
\alpha_w u-\alpha_p\triangle u=-\lambda_2+\Lambda_w^k-\nabla\cdot\Lambda_p^k+\alpha_w v^k w^k-\alpha_p\nabla\cdot p^k,
\end{equation}
where $\nabla\cdot$ denotes the divergence operator (conjugate operator of negative gradient $-\nabla$). 
We can readily solve the above equations  by using conjugate gradient (CG) method or fast Fourier transform. 

For  $v, w$-subproblems (same to BCA, but with different notations), one can readily obtain that
\begin{equation}
\label{eq:33}
v^{k+1}=\max\left(\epsilon\mathbf 1,\tilde{v}^{k+1}\right),
\end{equation}
where
\begin{displaymath}
\tilde{v}^{k+1}=\myatop{\mathbf{1}}{\lambda_1\mathbf{1}+\alpha_w(w^k)^2}\circ\left(\lambda_1 f+\lambda_2\ln w^k+\lambda_2-w^k\circ\Lambda_w^k+\alpha_w w^k\circ u^{k+1}\right).
\end{displaymath}
and 
\begin{equation}
\label{eq:34}
w_i^{k+1}=\frac{1}{2v_i^{k+1}}\left[(u_i^{k+1}-\frac{\Lambda_{w(i)}^k}{\alpha_w})+\sqrt{(u_i^{k+1}-\frac{\Lambda_{w(i)}^k}{\alpha_w})^2+\frac{4\lambda_2 v_i^{k+1}}{\alpha_w}}\ \right].
\end{equation}

For the $p$-subproblem, one has
\begin{displaymath}
p^{k+1}=\argmin_{p}\left\{\sum\limits_{i}| p_{i}|+\frac{\alpha_{p}}{2}\left\| p+\tfrac{\Lambda_p^k}{\alpha_p}-\nabla u^{k+1}\right\|^{2}\right\}.
\end{displaymath}
The solution is exactly the soft thresholding of $\nabla u^{k+1}-\frac{\Lambda_{p}^{k}}{\alpha_{p}}$:
\begin{equation}
\label{eq:35}
p^{k+1}=\mathrm{Thresh}_{\frac{1}{\alpha_{p}}}(\nabla u^{k+1}-\tfrac{\Lambda_{p}^{k}}{\alpha_{p}}),
\end{equation}
where  $\mathrm{Thresh}_{\eta}(p):=\max\{0,\vert p\vert - \eta\}\circ\mathrm{sign}(p),$ and {$\mathrm{sign}(p):=\big(\myatop{p^{(1)}}{\vert p\vert},\myatop{p^{(2)}}{\vert p\vert}\big)$}, $\vert p\vert=\sqrt{|p^{(1)}|^2+|p^{(2)}|^2}$. Note that all the operations here are element-wise.

Finally, the overall algorithm summarizing the above analysis is given below:
\begin{algorithm}[H]
	\renewcommand{\algorithmicrequire}{\textbf{Input:}}
	\renewcommand{\algorithmicensure}{\textbf{Initialization:}}
	\caption{BCA$_f$}
	\label{alg:2}
	\begin{algorithmic}[1]
		\REQUIRE {Noisy data $f$ and parameters $\lambda_1$, $\lambda_2$, $\alpha_w$, $\alpha_p$.}\\
		\ENSURE {{ $u^{0}=f, v^{0}=f, w^0=\mathbf 1, p^{0}=\mathbf 0, \Lambda_w^{0}=\mathbf 0, \Lambda_p^{0}=\mathbf 0, k=0.$}}\\
		\WHILE{Stopping criteria is not satisfied}
		\STATE Solve  $u^{k+1}$ by using CG for \eqref{eq:32}
		\STATE Solve  $v^{k+1}$ by \eqref{eq:33}
		\STATE Solve  $w^{k+1}$ by \eqref{eq:34}
		\STATE Solve  $p^{k+1}$ by \eqref{eq:35}
		\STATE 
		Update the multipliers by 
		\[
		\begin{split}
		&\Lambda_w^{k+1}=\Lambda_{w}^{k}+\alpha_w(v^{k+1}\circ w^{k+1}-u^{k+1});\\
		&\Lambda_{p}^{k+1}=\Lambda_{p}^{k}+\alpha_{p}(p^{k+1}-\bigtriangledown u^{k+1}).
		\end{split}
		\]
		\STATE $k\leftarrow k+1.$
		\ENDWHILE
	\end{algorithmic}  
\end{algorithm}

Here we remark that the convergence study of BCA$_f$ seems more difficult. If directly following the technique for BCA in the last subsection, since the subproblem for $p$ is non-differentiable, the successive errors of $\Lambda_p$ cannot be controlled by  the successive errors of $p$ such that it seems impossible to guarantee the sufficient decrease of the whole iterative sequences. In the future, we will either develop more advanced technique to control this error, or investigate other regularization terms with Lipschitz continuous gradient.

\section{Numerical Experiments}
\label{sec:experiments}

Since Poisson noise is data-dependent, the noise level of the observed images depends on the pixel value, and therefore we introduce a scale factor  $\eta\in(0,\infty)$ to control the scale of the image (simulating different number of photons detector received), which is inversely proportional to the amount of noise added to the data, i.e. $v_{i}\sim \mathrm{Poisson}(\eta u)/\eta$. Meanwhile, we add Gaussian noise with different variances $\sigma^2$.

The Signal-to-Noise Ratio (SNR) in dB is used to measure the quality of the recovery result,  defined as:
$\mathrm{SNR}(u,u_{g})=-10\log_{10}\tfrac{\sum\limits_{i}\vert u_i-{(u_{g}})_i\vert^2}{\sum\limits_{i}\vert u_i\vert^{2}},$
where $u_{g}$ is the ground-truth image (See Fig. \ref{fig1}) and $u$ is the reconstructed image. The structural similarity (SSIM) index  \cite{1284395} is also provided to measure the quality  of restored results  
(Smaller values mean the better quality).

\begin{figure}[htb]
 \centering 
 \subfloat[]{\includegraphics[width=0.3\columnwidth]{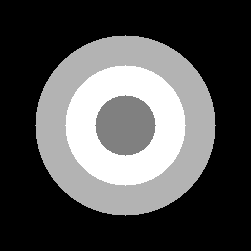}}\
 \subfloat[]{\includegraphics[width=0.3\columnwidth]{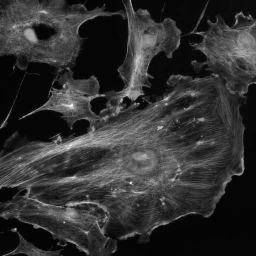}}\
 \subfloat[]{\includegraphics[width=0.3\columnwidth]{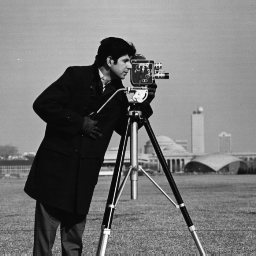}}
 \caption{(a) Circles; (b) Fluorescent Cells; (c) Cameraman}
 \label{fig1}
\end{figure}

We set the stopping criterion as the successive error $\mathrm{SE}:=\tfrac{\Vert u_{k+1}-u_{k}\Vert}{\Vert u_{k}\Vert}\leq \xi$ or  the iteration  reaches 1000, where $\xi$ is a desired tolerance.
{To evaluate the performance of the proposed algorithms, we compare them with other operator-splitting algorithms, including the $L^2$ data fitting (TV+$L^2$) method \cite{wu2010augmented}, KL-divergence (TV+KL) method \cite{wu2011augmented}, Shifted-Poisson (TV+SP) method \cite{article3}, the combination of $L^2$ data fitting and KL-divergence  (TV+KL+$L^2$) method \cite{article14} and the primal-dual (TV+PD) method for \eqref{eq:2} following \cite{article10}. The last three algorithms were specially designed for the MPG noise; especially, TV+SP is simple to implement, and able to  produce comparable results reported in \cite{article11,article8}. 
}
All the parameters for the compared algorithms are tuned heuristically to gain optimal image quality. For fair comparison, we set $\xi=5\times 10^{-4}$ for all compared algorithms and set initialization of the variable w.r.t. reconstructed output to noisy images.

All compared algorithms are implemented in Matlab, and performed using a Laptop with Intel Core i5 processor and 8GB RAM. 

\subsection{Performances and convergence}

We first show how to determine the optimal inner iteration number for the proposed BCA, where we employ the gradient descent method proposed by Chambolle  \cite{Chambolle:2004:ATV:964969.964985} to solve the $u$-subproblem. To find the optimal inner iteration number (More inner iterations will increase the overall computational cost), we plot the SNR curves of different inner iteration numbers ($1,2,5,10,20,100$)  in Fig. \ref{fig8}. Obviously one sees that when the inner iteration number is greater than $10$, the SNR value is almost unchanged. Hence, in the latter tests,  the inner loop number set to $10$ for the proposed BCA.
\begin{figure}[]
 \centering 
 \subfloat[]{\includegraphics[width=0.3\columnwidth]{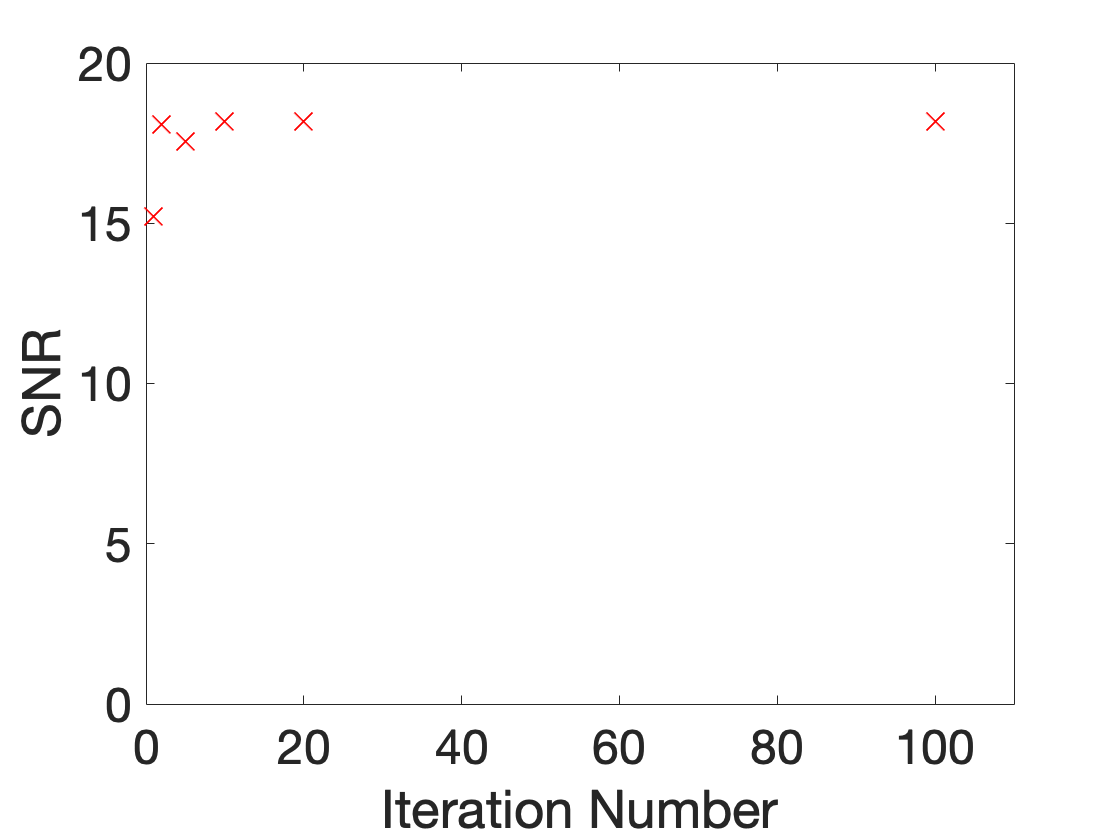}}\
 \subfloat[]{\includegraphics[width=0.3\columnwidth]{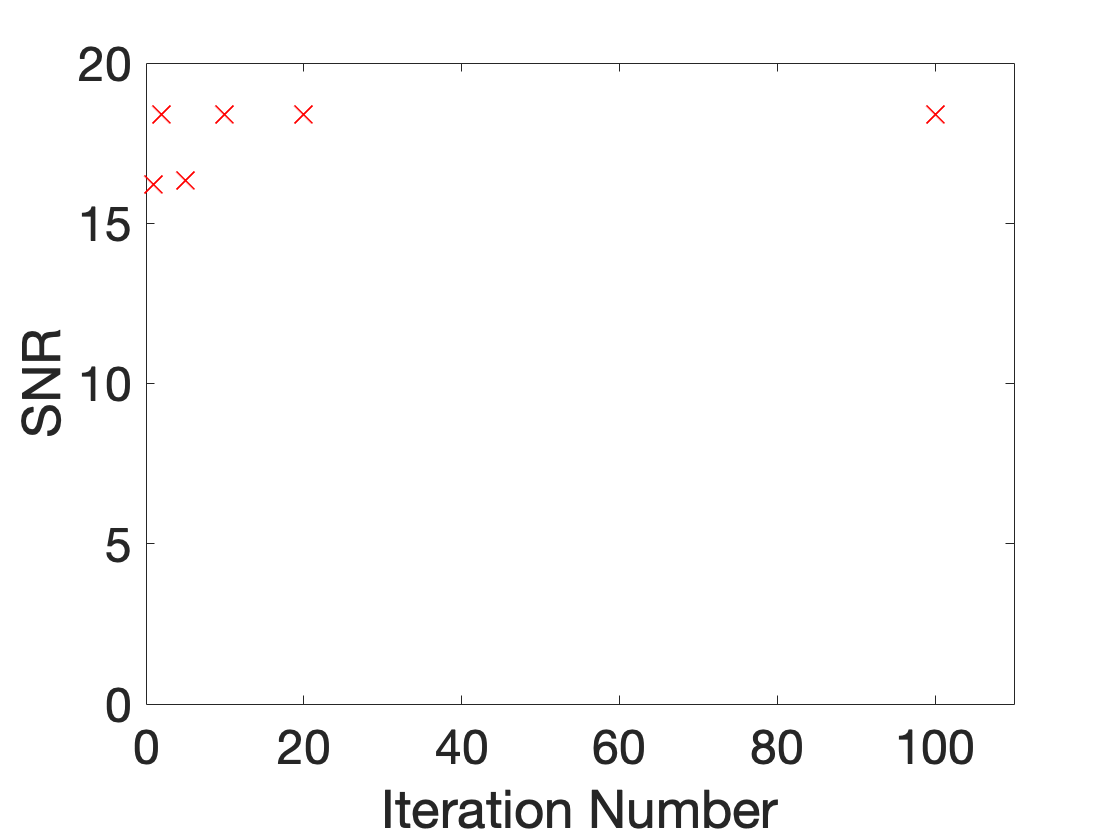}}\
 \subfloat[]{\includegraphics[width=0.3\columnwidth]{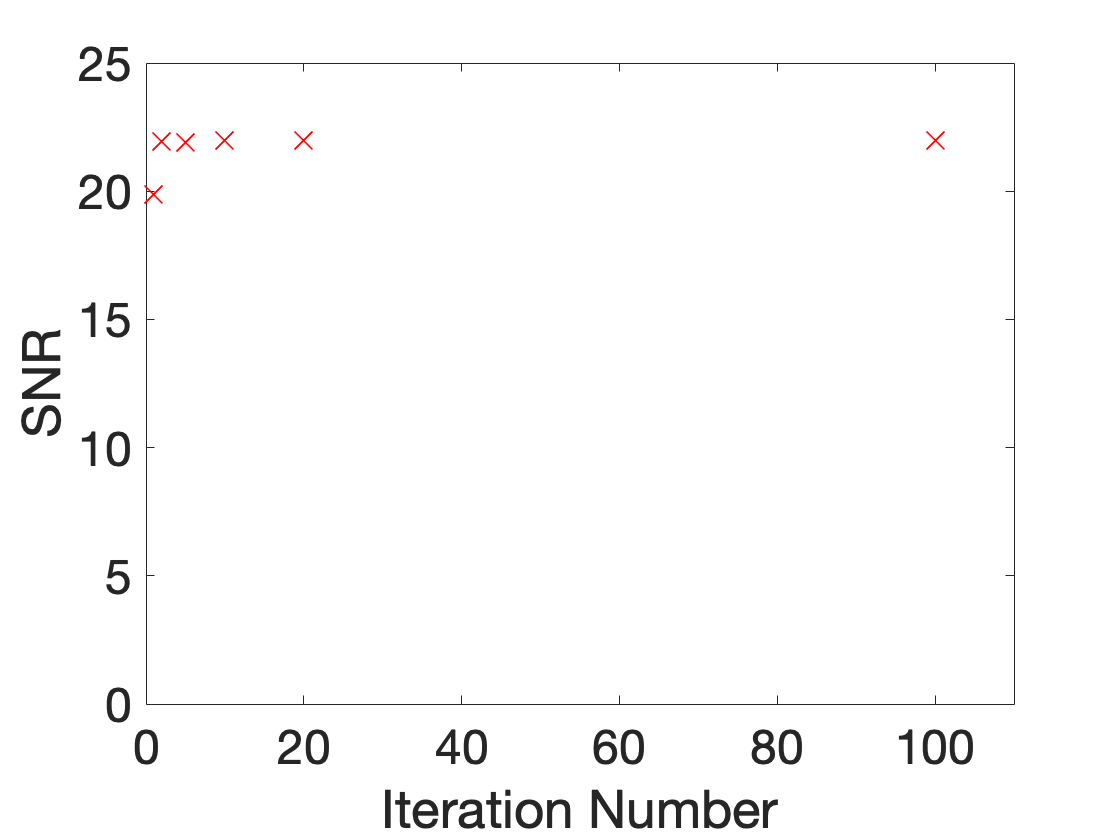}}\\
 \subfloat[]{\includegraphics[width=0.3\columnwidth]{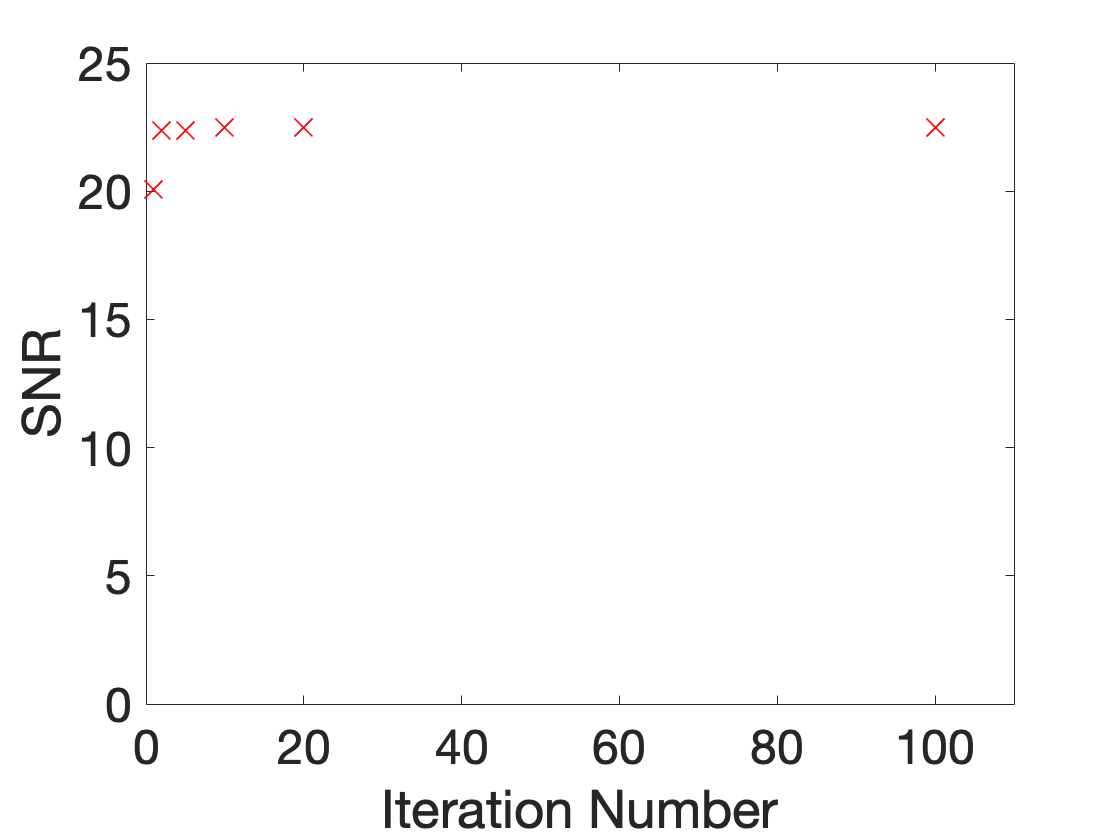}}\
 \subfloat[]{\includegraphics[width=0.3\columnwidth]{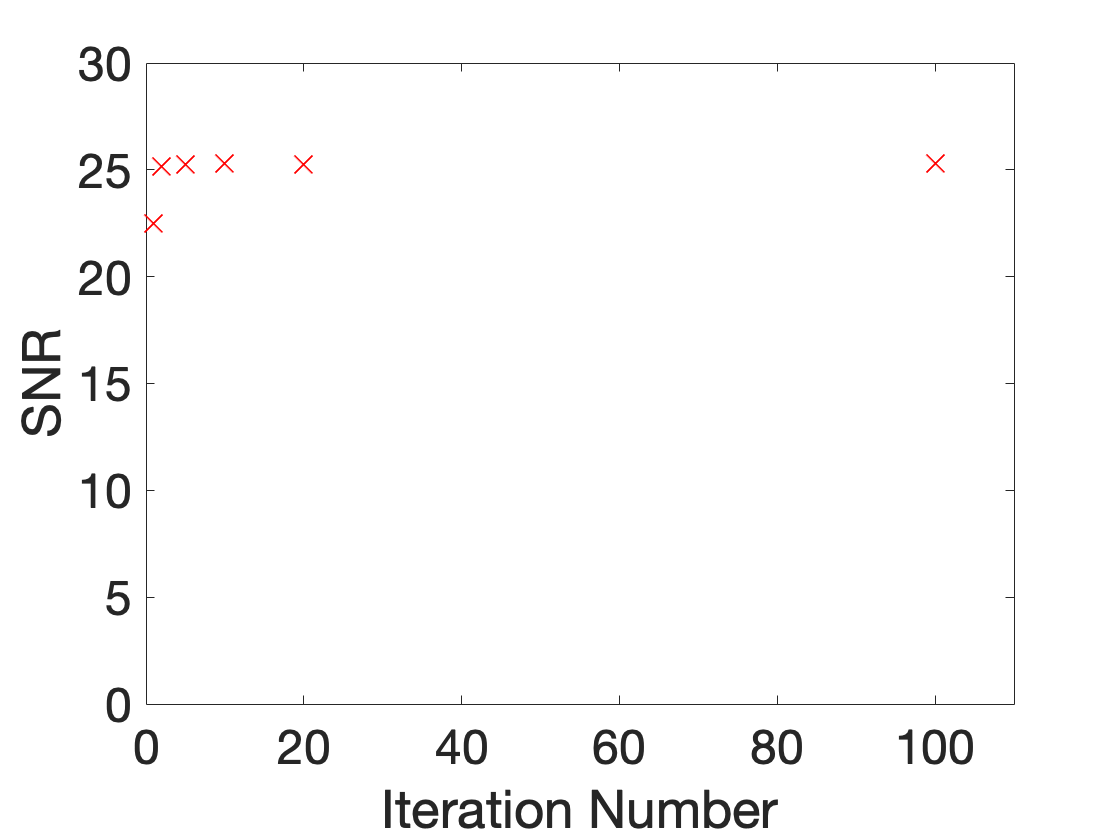}}\
 \subfloat[]{\includegraphics[width=0.3\columnwidth]{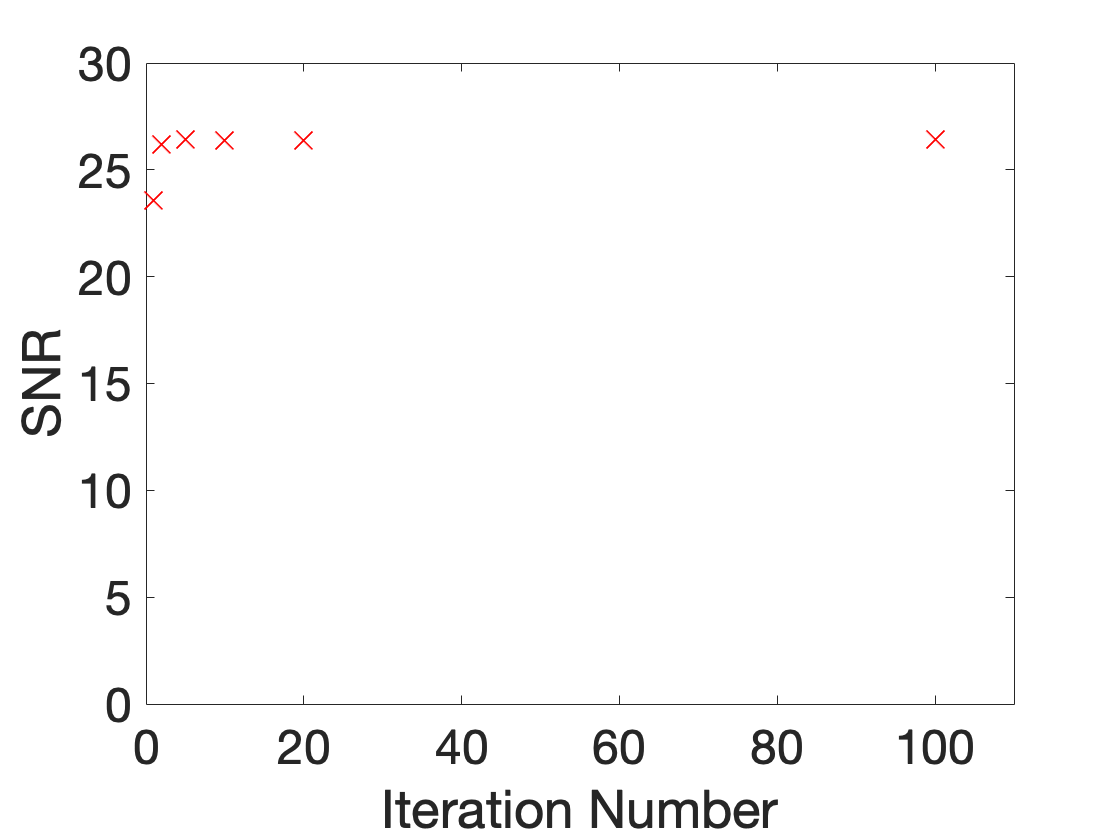}}
 \caption{SNR changes w.r.t. the number of inner iterations using gradient descent method \cite{Chambolle:2004:ATV:964969.964985}  for proposed BCA Algorithm.}
 \label{fig8}
\end{figure}

 We evaluate  the performances of our proposed methods, compared with five other methods. The recovery results with zoomed regions are put in Figs. \ref{fig2}-\ref{fig22}, 
 where  noises are generated with $\eta=4, \sigma=10^{-4}$, $\eta=16, \sigma=10^{-4}$, and $\eta=64, \sigma=10^{-1}$ for the three different images respectively. Generally speaking, one readily sees that the proposed BCA and BCA$_f$ generate better results compared with denoising methods  including TV+$L^2$, TV+KL and TV+SP. In Fig. \ref{fig2}, one can observe that the region  located at the red circles in the recovery results by the proposed BCA and BCA$_f$, especially the part below the edges, appears more flat than other compared algorithms. The recovery accuracy of recovery results by proposed algorithms with higher SNRs is also better than other compared algorithms, inferred from Fig. \ref{fig2}. In Figs. \ref{fig21} and \ref{fig22}, the recovery results by proposed algorithms look better than those by  TV+$L^2$, TV+KL and TV+SP, while look quite similar to those by TV+KL+$L^2$ and  TV+PD algorithms. Table \ref{tab0} reports the SNRs and SSIMs of recovery images for all compared algorithms with more different noisy levels ($\eta=1, 4, 16$ and $\sigma=10^{-1}, 10^{-4}$), that
 demonstrates that the proposed algorithms gain highest SNRs averagely. 
 
 In order to further show the advantage in term of speed for the proposed algorithms compared with TV+PD for the same model, we report the SNRs changes w.r.t. the elapsed CPU time in Fig. \ref{fig3}. 
 Readily one can see the proposed algorithm converges much faster  than TV+PD\footnote{The iteration number for subproblems solved by Newton method affect the convergence speed of TV+PD, 
 and 5 iterations are adopted heuristically to gain best speed.}. 
 Table \ref{tab1} reports the computational time of the proposed algorithms and TV+PD. It is obvious that the proposed algorithms have higher speed than TV+PD. The fully splitting algorithm BCA$_f$ computes much faster than BCA.
 We also remark that our proposed BCA and BCA$_f$ have fewer parameters (three parameters for BCA, and 
 four  parameters for BCA$_{f}$), while compared TV+PD has six parameters. 
\begin{figure}[]
\begin{center}
\begin{tabular}{cclcl}
\includegraphics[width=2.5cm]{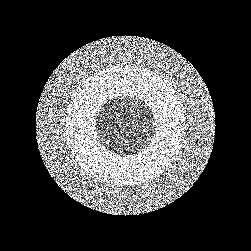} &
\begin{overpic}[width=2.5cm]{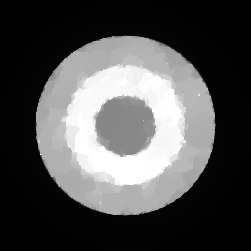}
    \put(15,64){\tikz\draw[red,thick,dashed] (0,0) rectangle (.3,.3);}
\end{overpic} &
\adjustbox{width=2.5cm,trim={.13\width} {.6\height} {0.72\width} {.25\height},clip}
{\begin{overpic}[width=2.5cm]{cirl42.png}
    \put(17,65){\tikz\draw[red,thick] (0,0) circle (.08);}
\end{overpic}
}&
\begin{overpic}[width=2.5cm]{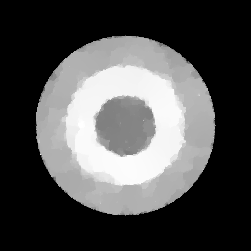}
    \put(15,64){\tikz\draw[red,thick,dashed] (0,0) rectangle (.3,.3);}
\end{overpic} &
\adjustbox{width=2.5cm,trim={.13\width} {.6\height} {0.72\width} {.25\height},clip}
{\begin{overpic}[width=2.5cm]{cirkl42.png}
    \put(17,65){\tikz\draw[red,thick] (0,0) circle (.08);}
\end{overpic}}  \\
 \makecell[tl]{\ \ \ Corruption: \\$\eta=4, \sigma=10^{-4}$ } & TV+$L^2$ : 21.64 & Zoomed & TV+KL: 21.83 & Zoomed  \\[2mm]
&
\begin{overpic}[width=2.5cm]{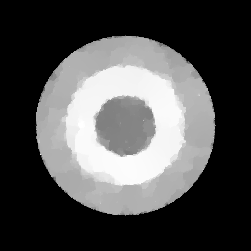}
    \put(15,64){\tikz\draw[red,thick,dashed] (0,0) rectangle (.3,.3);}
\end{overpic} &
\adjustbox{width=2.5cm,trim={.13\width} {.6\height} {0.72\width} {.25\height},clip}
{\begin{overpic}[width=2.5cm]{cirsp42.png}
    \put(17,65){\tikz\draw[red,thick] (0,0) circle (.08);}
\end{overpic}}&
\begin{overpic}[width=2.5cm]{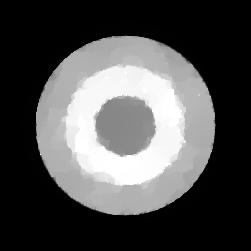}
    \put(15,64){\tikz\draw[red,thick,dashed] (0,0) rectangle (.3,.3);}
\end{overpic} &
\adjustbox{width=2.5cm,trim={.13\width} {.6\height} {0.72\width} {.25\height},clip}
{\begin{overpic}[width=2.5cm]{cirl2kl42.png}
    \put(17,65){\tikz\draw[red,thick] (0,0) circle (.08);}
\end{overpic}}  \\
& TV+SP: 21.83 & Zoomed & TV+KL+$L^2$ : 22 & Zoomed  \\[2mm]
&
\begin{overpic}[width=2.5cm]{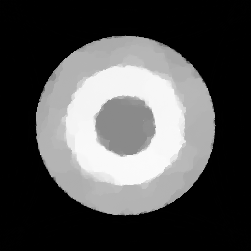}
    \put(15,64){\tikz\draw[red,thick,dashed] (0,0) rectangle (.3,.3);}
\end{overpic} &
\adjustbox{width=2.5cm,trim={.13\width} {.6\height} {0.72\width} {.25\height},clip}
{\begin{overpic}[width=2.5cm]{cirpd42.png}
    \put(17,65){\tikz\draw[red,thick] (0,0) circle (.08);}
\end{overpic}}&
\begin{overpic}[width=2.5cm]{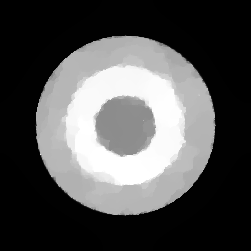}
    \put(15,64){\tikz\draw[red,thick,dashed] (0,0) rectangle (.3,.3);}
\end{overpic} &
\adjustbox{width=2.5cm,trim={.13\width} {.6\height} {0.72\width} {.25\height},clip}
{\begin{overpic}[width=2.5cm]{cirbca42.png}
    \put(17,65){\tikz\draw[red,thick] (0,0) circle (.08);}
\end{overpic}}  \\
& TV+PD: 22.16 & Zoomed & BCA : 22.18 & Zoomed  \\[2mm]
&\begin{overpic}[width=2.5cm]{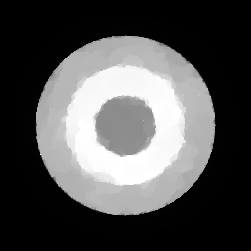}
    \put(15,64){\tikz\draw[red,thick,dashed] (0,0) rectangle (.3,.3);}
\end{overpic} &
\adjustbox{width=2.5cm,trim={.13\width} {.6\height} {0.72\width} {.25\height},clip}
{\begin{overpic}[width=2.5cm]{cirbcaf42.png}
    \put(17,65){\tikz\draw[red,thick] (0,0) circle (.08);}
\end{overpic}} 
&
\begin{overpic}[width=2.5cm]{circle4.png}
    \put(15,64){\tikz\draw[red,thick,dashed] (0,0) rectangle (.3,.3);}
\end{overpic} &
\adjustbox{width=2.5cm,trim={.13\width} {.6\height} {0.72\width} {.25\height},clip}
{\begin{overpic}[width=2.5cm]{circle4.png}
    \put(17,65){\tikz\draw[red,thick] (0,0) circle (.08);}
\end{overpic}} 
\\
& BCA$_f$ : 22.35 & Zoomed  &Truth& Zoomed
\end{tabular}
\caption{Recovery results by proposed algorithms and other compared algorithms (with SNRs(dB) below the figures)  for the image ``Circles'' in the case of MPG noises which are generated with $\eta=4, \sigma=10^{-4}$.}
\label{fig2}
\end{center}
\end{figure}

\begin{figure}[]
\begin{center}
\begin{tabular}{cclcl}
\includegraphics[width=2.5cm]{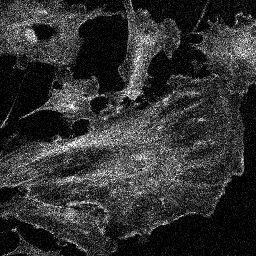} &
\begin{overpic}[width=2.5cm]{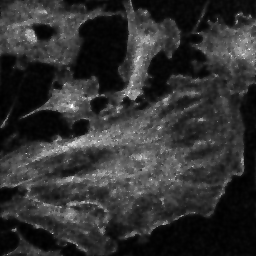}
    \put(18,48){\tikz\draw[red,thick,dashed] (0,0) rectangle (.55,.55);}
\end{overpic} &
\adjustbox{width=2.5cm,trim={.18\width} {.5\height} {.6\width} {.28\height},clip}
{\includegraphics[]{fluol162.png}}&
\begin{overpic}[width=2.5cm]{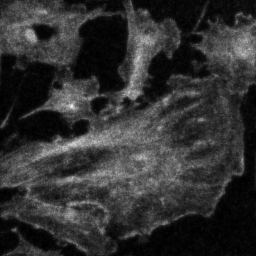}
    \put(18,48){\tikz\draw[red,thick,dashed] (0,0) rectangle (.55,.55);}
\end{overpic} &
\adjustbox{width=2.5cm,trim={.18\width} {.5\height} {.6\width} {.28\height},clip}
{\includegraphics[]{fluokl162.png}}  \\
 \makecell[tl]{\ \ \ Corruption: \\$\eta=16, \sigma=10^{-4}$ } & TV+$L^2$ : 14.17 & Zoomed & TV+KL: 14.16 & Zoomed  \\[2mm]
&
\begin{overpic}[width=2.5cm]{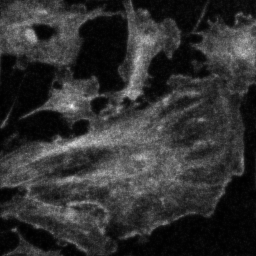}
    \put(18,48){\tikz\draw[red,thick,dashed] (0,0) rectangle (.55,.55);}
\end{overpic} &
\adjustbox{width=2.5cm,trim={.18\width} {.5\height} {.6\width} {.28\height},clip}
{\includegraphics[]{fluosp162.png}}&
\begin{overpic}[width=2.5cm]{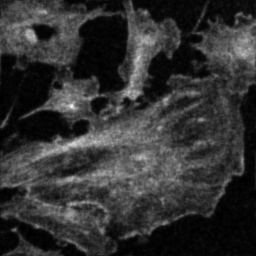}
    \put(18,48){\tikz\draw[red,thick,dashed] (0,0) rectangle (.55,.55);}
\end{overpic} &
\adjustbox{width=2.5cm,trim={.18\width} {.5\height} {.6\width} {.28\height},clip}
{\includegraphics[]{fluol2kl162.png}}  \\
& TV+SP: 14.16 & Zoomed & TV+KL+$L^2$ : 14.59 & Zoomed  \\[2mm]
&
\begin{overpic}[width=2.5cm]{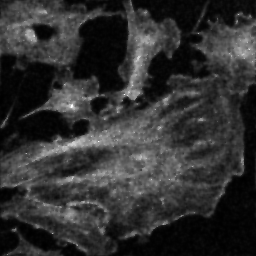}
    \put(18,48){\tikz\draw[red,thick,dashed] (0,0) rectangle (.55,.55);}
\end{overpic} &
\adjustbox{width=2.5cm,trim={.18\width} {.5\height} {.6\width} {.28\height},clip}
{\includegraphics[]{fluopd162.png}}&
\begin{overpic}[width=2.5cm]{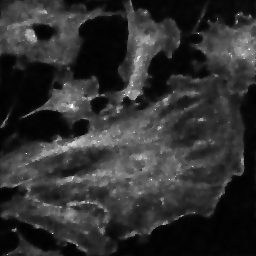}
    \put(18,48){\tikz\draw[red,thick,dashed] (0,0) rectangle (.55,.55);}
\end{overpic} &
\adjustbox{width=2.5cm,trim={.18\width} {.5\height} {.6\width} {.28\height},clip}
{\includegraphics[]{fluobca162.png}}  \\
& TV+PD: 14.47 & Zoomed & BCA : 14.42 & Zoomed  \\[2mm]
&\begin{overpic}[width=2.5cm]{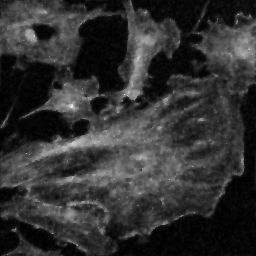}
    \put(18,48){\tikz\draw[red,thick,dashed] (0,0) rectangle (.55,.55);}
\end{overpic} &
\adjustbox{width=2.5cm,trim={.18\width} {.5\height} {.6\width} {.28\height},clip}
{\includegraphics[]{fluobcaf162.png}}
&\begin{overpic}[width=2.5cm]{fluocells1.png}
    \put(18,48){\tikz\draw[red,thick,dashed] (0,0) rectangle (.55,.55);}
\end{overpic} &
\adjustbox{width=2.5cm,trim={.18\width} {.5\height} {.6\width} {.28\height},clip}
{\includegraphics[]{fluocells1.png}}
\\
& BCA$_f$ : 14.62 & Zoomed  &Truth & Zoomed
\end{tabular}
\caption{Recovery results by proposed algorithms and other compared algorithms (with SNRs(dB) below the figures)  for the image ``Fluorescent Cells'' in the case of MPG noises which are generated with $\eta=16, \sigma=10^{-4}$.}
\label{fig21}
\end{center}
\end{figure}

\begin{figure}[]
\begin{center}
\begin{tabular}{cclcl}
\includegraphics[width=2.5cm]{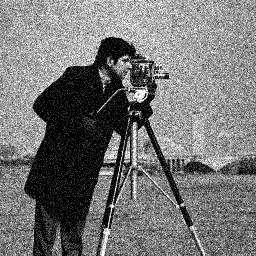} &
\begin{overpic}[width=2.5cm]{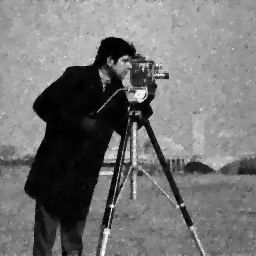}
    \put(10,9){\tikz\draw[red,thick,dashed] (0,0) rectangle (.65,.65);}
\end{overpic} &
\adjustbox{width=2.5cm,trim={.1\width} {.1\height} {.65\width} {.65\height},clip}
{\begin{overpic}[width=2.5cm]{caml641.png}
    \put(16,18){\tikz\draw[blue,thick] (0,0) circle (.1);}
\end{overpic}}&
\begin{overpic}[width=2.5cm]{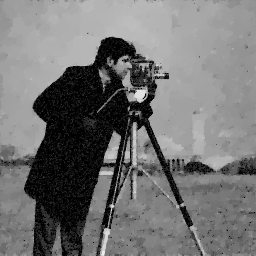}
    \put(10,9){\tikz\draw[red,thick,dashed] (0,0) rectangle (.65,.65);}
\end{overpic} &
\adjustbox{width=2.5cm,trim={.1\width} {.1\height} {.65\width} {.65\height},clip}
{\begin{overpic}[width=2.5cm]{camkl641.png}
    \put(16,18){\tikz\draw[blue,thick] (0,0) circle (.1);}
\end{overpic}}  \\
 \makecell[tl]{\ \ \ Corruption: \\$\eta=64, \sigma=10^{-1}$ } & TV+$L^2$ : 20.31 & Zoomed & TV+KL: 19.78 & Zoomed  \\[2mm]
&
\begin{overpic}[width=2.5cm]{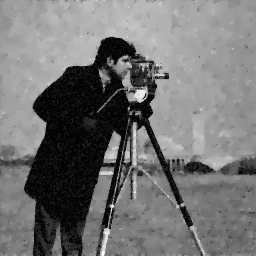}
    \put(10,9){\tikz\draw[red,thick,dashed] (0,0) rectangle (.65,.65);}
\end{overpic} &
\adjustbox{width=2.5cm,trim={.1\width} {.1\height} {.65\width} {.65\height},clip}
{\begin{overpic}[width=2.5cm]{camsp641.png}
    \put(16,18){\tikz\draw[blue,thick] (0,0) circle (.1);}
\end{overpic}}&
\begin{overpic}[width=2.5cm]{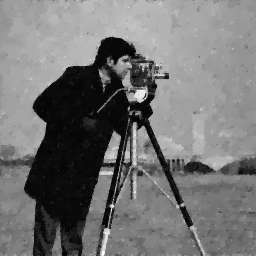}
    \put(10,9){\tikz\draw[red,thick,dashed] (0,0) rectangle (.65,.65);}
\end{overpic} &
\adjustbox{width=2.5cm,trim={.1\width} {.1\height} {.65\width} {.65\height},clip}
{\begin{overpic}[width=2.5cm]{caml2kl641.png}
    \put(16,18){\tikz\draw[blue,thick] (0,0) circle (.1);}
\end{overpic}}  \\
& TV+SP: 20.08 & Zoomed & TV+KL+$L^2$ : 20.57 & Zoomed  \\[2mm]
&
\begin{overpic}[width=2.5cm]{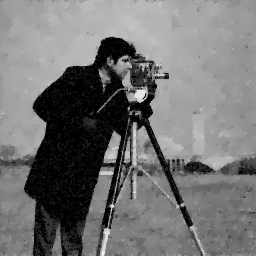}
    \put(10,9){\tikz\draw[red,thick,dashed] (0,0) rectangle (.65,.65);}
\end{overpic} &
\adjustbox{width=2.5cm,trim={.1\width} {.1\height} {.65\width} {.65\height},clip}
{\begin{overpic}[width=2.5cm]{campd641.png}
    \put(16,18){\tikz\draw[blue,thick] (0,0) circle (.1);}
\end{overpic}}&
\begin{overpic}[width=2.5cm]{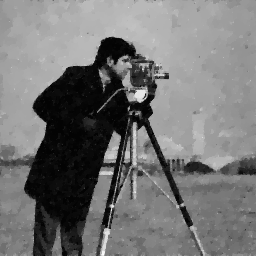}
    \put(10,9){\tikz\draw[red,thick,dashed] (0,0) rectangle (.65,.65);}
\end{overpic} &
\adjustbox{width=2.5cm,trim={.1\width} {.1\height} {.65\width} {.65\height},clip}
{\begin{overpic}[width=2.5cm]{cambca641.png}
    \put(16,18){\tikz\draw[blue,thick] (0,0) circle (.1);}
\end{overpic}}  \\
& TV+PD: 20.39 & Zoomed & BCA : 20.63 & Zoomed  \\[2mm]
&\begin{overpic}[width=2.5cm]{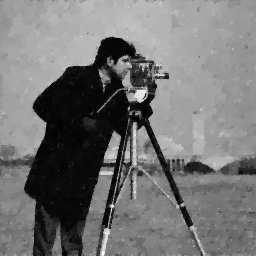}
    \put(10,9){\tikz\draw[red,thick,dashed] (0,0) rectangle (.65,.65);}
\end{overpic} &
\adjustbox{width=2.5cm,trim={.1\width} {.1\height} {.65\width} {.65\height},clip}
{\begin{overpic}[width=2.5cm]{cambcaf641.png}
    \put(16,18){\tikz\draw[blue,thick] (0,0) circle (.1);}
\end{overpic}} 
&\begin{overpic}[width=2.5cm]{cameraman.png}
    \put(10,9){\tikz\draw[red,thick,dashed] (0,0) rectangle (.65,.65);}
\end{overpic} &
\adjustbox{width=2.5cm,trim={.1\width} {.1\height} {.65\width} {.65\height},clip}
{\begin{overpic}[width=2.5cm]{cameraman.png}
    \put(16,18){\tikz\draw[blue,thick] (0,0) circle (.1);}
\end{overpic}} 
\\
& BCA$_f$ : 20.69 & Zoomed  &Truth & Zoomed
\end{tabular}
\caption{Recovery results by proposed algorithms and other compared algorithms (with SNRs(dB) below the figures)  for the image ``Cameraman'' in the case of MPG noises which are generated with $\eta=64, \sigma=10^{-1}$. Comparing with other results, there are less grey blocks in the restoration images of proposed algorithms on the blue circle, which can show the better performance of our proposed algorithms.}
\label{fig22}
\end{center}
\end{figure}

\begin{figure}[]
 \centering 
 \includegraphics[width=0.49\columnwidth]{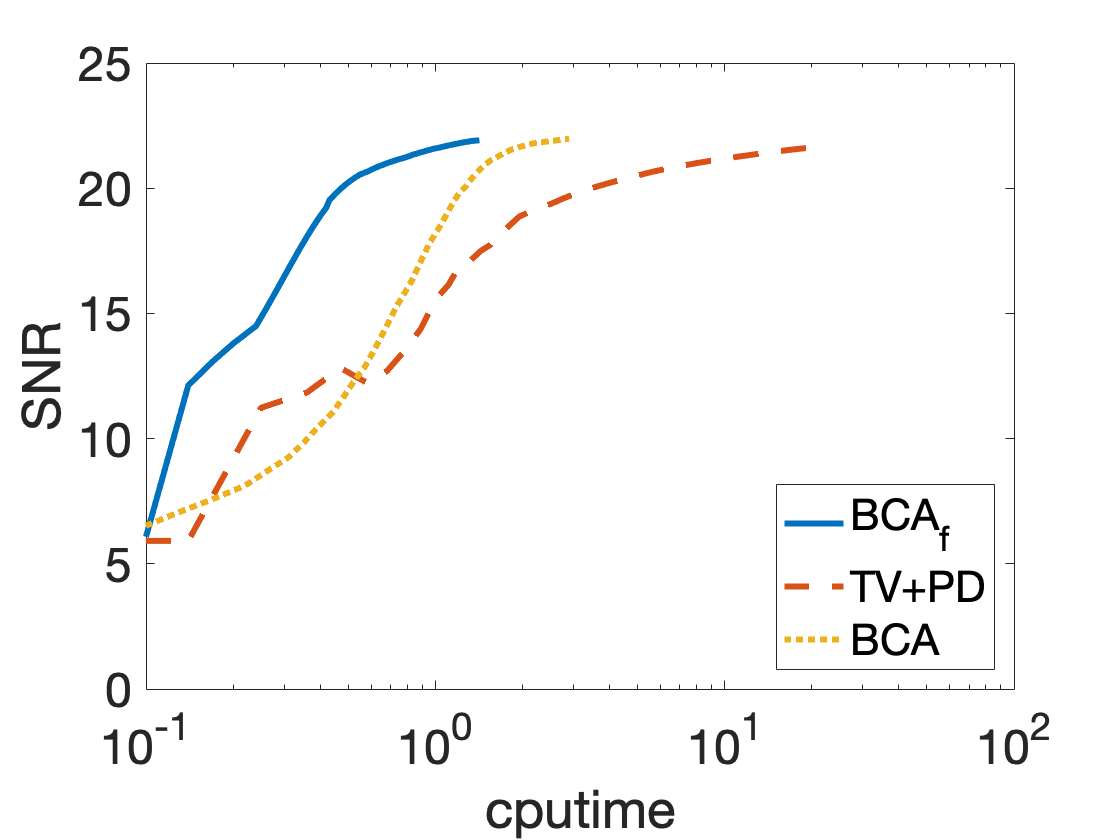}\
 \includegraphics[width=0.49\columnwidth]{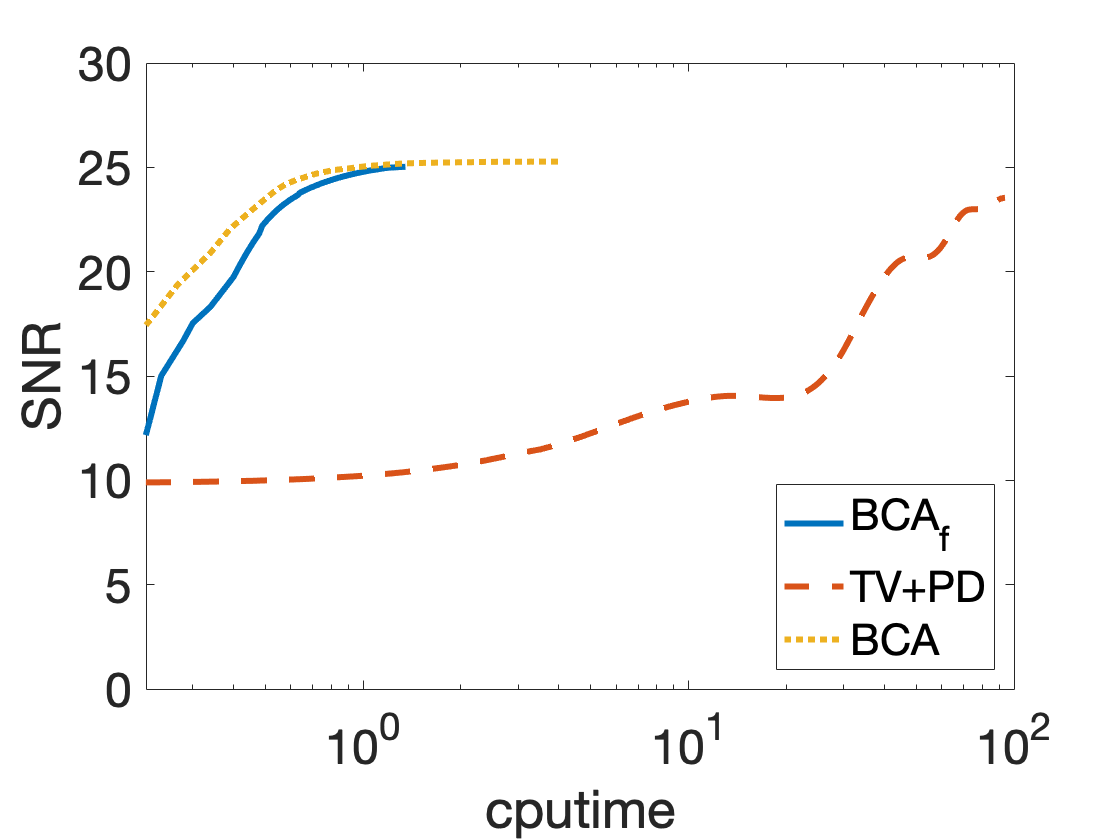}
 \caption{Histories of SNR changes for  proposed algorithms and TV+PD w.r.t. the elapsed CPU time (in log scale).  Left: $\eta=4, \sigma=10^{-1}$. Right: $\eta=16, \sigma=10^{-1}$.}
 \label{fig3}
\end{figure}

\begin{table*}[]
\twocm{\renewcommand{\arraystretch}{1.5}}
\onecm{\renewcommand{\arraystretch}{0.7}}
  \centering
  \caption{Denoising performance (First row: SNR in dB. Second row: SSIM.) with Poisson-Gaussian Noise}
  \label{tab0}
  \scalebox{.7}{\onecm{\tiny}
  \begin{tabular}{|l||c|c|c||c|c|c|c|c|c|c|}
    \hline
    Image & $\eta$ & $\sigma$ & Noisy & ~~TV+$L^2$~ & TV+KL~& TV+SP~ & TV+KL+$L^2$ & TV+PD & BCA & BCA$_f$\\\hline\hline

    \multirow{14}{*}{Circle} 
    & \multirow{2}{*}{1} & \multirow{2}{*}{$10^{-1}$} & 2.50 & 17.21 & 16.68 & 16.76 & 17.28& 17.44 & 17.84 & \bf 17.97\\\cline{4-11}
    &   &   & 0.0452 & 0.6147 & 0.4044 & 0.4059 & 0.3975 & 0.7544 & 0.7125 & \bf 0.9007\\\cline{2-11}
    & \multirow{2}{*}{1} & \multirow{2}{*}{$10^{-4}$} & 2.57 & 17.34 & 17.16 & 17.24 & 17.76 & 17.00 & 18.02 &\bf 18.10\\\cline{4-11}
    &   &   & 0.5494 & 0.6087 & 0.7997 & 0.8678 & 0.8251 & 0.8711 & 0.8913 & \bf 0.9029\\\cline{2-11}    
    & \multirow{2}{*}{4} & \multirow{2}{*}{$10^{-1}$} & 5.92 & 21.48 & 20.44 & 20.77 & 21.29 & 21.64 &\bf 22.01 & 21.78\\\cline{4-11}
    &   &   & 0.0687 & 0.7149 & 0.4763 & 0.5305 & 0.5259 & 0.9216 & 0.7153 & \bf 0.9357\\\cline{2-11}      
    & \multirow{2}{*}{4} & \multirow{2}{*}{$10^{-4}$} & 6.32 & 21.64 & 21.83 & 21.83 & 22.00 & 22.16 & 22.18 & \bf 22.35\\\cline{4-11}
    &   &   & 0.5793 & 0.7397 & 0.9385 & 0.9385 & 0.9299 & 0.9466 & \bf 0.9472 & 0.9180\\\cline{2-11}      
    & \multirow{2}{*}{16} & \multirow{2}{*}{$10^{-1}$} & 9.88 & 24.64 & 22.33 & 22.62 & 23.89 & 23.54 &\bf 25.28 & 25.04\\\cline{4-11}
    &   &   & 0.0971 & 0.8411 & 0.5088 & 0.5436 & 0.5438 & 0.8141 & \bf 0.9697 & 0.8079\\\cline{2-11}     
    & \multirow{2}{*}{16} & \multirow{2}{*}{$10^{-4}$} & 11.55 & 25.46 & 26.41 & 26.41 & 26.46 & 26.47 & 26.37 &\bf 27.12\\\cline{4-11}
    &   &   & 0.6149 & 0.8816 & 0.9500 & 0.9500 & 0.9594 & 0.9651 & \bf 0.9676 & 0.9168\\\cline{2-11}      
    & \multirow{2}{*}{Average} & \multirow{2}{*}{} & 6.46 & 21.30 & 20.81 & 20.94 & 21.45 & 21.38 & 21.95 &\bf 22.06\\\cline{4-11}
    &   &   & 0.3258 & 0.7335 & 0.6796 & 0.7061 & 0.6969 & 0.8788 & 0.8673 & \bf 0.8970\\\hline\hline

    \multirow{14}{*}{\parbox[c]{0.07\textwidth}{
        Fluorescent\\Cells}
    } 
    & \multirow{2}{*}{1} & \multirow{2}{*}{$10^{-1}$} & 1.16 & 9.88 & 9.72 & 9.72 & 9.96 & 9.48 &\bf 10.37 & 10.33\\\cline{4-11}
    &   &   & 0.0402 & 0.4861 & 0.4508 & 0.4532 & 0.4512 & 0.4572 & \bf 0.5026 & 0.4971\\\cline{2-11}     
    & \multirow{2}{*}{1} & \multirow{2}{*}{$10^{-4}$} & 1.22 & 9.97 & 9.83 & 9.83 & 9.98 & 9.79 &\bf 10.43 & 10.41\\\cline{4-11}
    &   &   & 0.0598 & 0.5058 & 0.4954 & 0.4954 & 0.5003 & 0.4471 & \bf 0.5108 & 0.5014\\\cline{2-11}     
    & \multirow{2}{*}{4} & \multirow{2}{*}{$10^{-1}$} & 3.14 & 11.10 & 11.58 & 11.54 & 11.75 & 11.62 & 11.66 &\bf 12.06\\\cline{4-11}
    &   &   & 0.1181 & 0.5554 & 0.5369 & 0.5239 & 0.5588 & 0.5674 & 0.5753 & \bf 0.5801\\\cline{2-11}      
    & \multirow{2}{*}{4} & \multirow{2}{*}{$10^{-4}$} & 3.59 & 11.25 & 11.88 & 11.88 & 12.17 & 11.88 & 11.96 &\bf 12.38\\\cline{4-11}
    &   &   & 0.1680 & 0.5765 & 0.6078 & 0.6078 & 0.6133 & 0.5531 & \bf 0.6160 & 0.6139\\\cline{2-11}       
    & \multirow{2}{*}{16} & \multirow{2}{*}{$10^{-1}$} & 5.77 & 13.43 & 12.66 & 12.64 & 13.27 & 13.45 & 13.37 &\bf 13.50\\\cline{4-11}
    &   &   & 0.2282 & 0.6424 & 0.5998 & 0.5989 & 0.6383 & 0.6669 & 0.6557 & \bf 0.6685\\\cline{2-11}     
    & \multirow{2}{*}{16} & \multirow{2}{*}{$10^{-4}$} & 7.87 & 14.17 & 14.16 & 14.16 & 14.59 & 14.47 & 14.42 &\bf 14.62 \\\cline{4-11}
    &   &   & 0.4003 & 0.7360 & 0.7228 & 0.7228 & 0.7388 & 0.7309 & \bf 0.7379 & 0.7368\\\cline{2-11}      
    & \multirow{2}{*}{Average} & \multirow{2}{*}{} & 3.79 & 11.63 & 11.64 & 11.63 & 11.95 & 11.78 & 12.04 &\bf 12.22\\\cline{4-11}
    &   &   & 0.1691 & 0.5837 & 0.5689 & 0.5670 & 0.5835 & 0.5704 & \bf 0.5997 & 0.5996\\\hline\hline    

    \multirow{14}{*}{Cameraman} 
    & \multirow{2}{*}{1} & \multirow{2}{*}{$10^{-1}$} & 1.97 & 13.06 & 14.25 & 14.19 & 14.30 & 14.30 &\bf 14.59 & 14.56\\\cline{4-11}
    &   &   & 0.0496 & 0.4167 & 0.5498 & 0.5322 & 0.5432 & 0.5524 & 0.5633 & \bf 0.5644\\\cline{2-11}      
    & \multirow{2}{*}{1} & \multirow{2}{*}{$10^{-4}$} & 2.00 & 13.09 & 14.36 & 14.36 & 14.40 & 14.33 &\bf 14.57 & 14.52\\\cline{4-11}
    &   &   & 0.0628 & 0.4238 & 0.4602 & 0.4602 & 0.4760 & 0.4362 & \bf 0.5854 & 0.5659\\\cline{2-11}       
    & \multirow{2}{*}{4} & \multirow{2}{*}{$10^{-1}$} & 5.02 & 15.66 & 16.46 & 16.43 & 16.56 & 16.17 & 16.79 &\bf 16.83\\\cline{4-11}
    &   &   & 0.1178 & 0.5729 & 0.6552 & 0.6540 & 0.6720 & 0.6422 & 0.6417 & \bf 0.6655\\\cline{2-11}  
    & \multirow{2}{*}{4} & \multirow{2}{*}{$10^{-4}$} & 5.28 & 15.81 & 16.27 & 16.27 & 16.47 & 16.14 & 16.99 &\bf 17.00\\\cline{4-11}
    &   &   & 0.1514 & 0.5879 & 0.6301 & 0.6301 & 0.6160 & 0.6047 & 0.6697 & \bf 0.6770\\\cline{2-11}  
    & \multirow{2}{*}{16} & \multirow{2}{*}{$10^{-1}$} & 9.06 & 18.25 & 18.60 & 18.60 & 18.81 & 18.18 & 18.99 &\bf 19.02\\\cline{4-11}
    &   &   & 0.1992 & 0.6393 & 0.7126 & 0.7181 & 0.7163 & 0.6527 & 0.7112 & \bf 0.7214\\\cline{2-11}  
    & \multirow{2}{*}{16} & \multirow{2}{*}{$10^{-4}$} & 10.24 & 18.88 & 19.44 & 19.44 & 19.64 & 19.59 &\bf 19.81 & 19.53\\\cline{4-11}
    &   &   & 0.2920 & 0.6980 & 0.7321 & 0.7321 & 0.7503 & 0.7317 & 0.7614 & \bf 0.7764\\\cline{2-11}  
    & \multirow{2}{*}{Average} & \multirow{2}{*}{} & 5.60 & 15.79 & 16.56 & 16.55 & 16.70 & 16.45 &\bf 16.96 & 16.91\\\cline{4-11}
    &   &   & 0.1455 & 0.5564 & 0.6233 & 0.6211 & 0.6290 & 0.6033 & 0.6555 & \bf 0.6618\\\hline

  \end{tabular}
}
\end{table*}

\begin{table*}[]
\twocm{\renewcommand{\arraystretch}{1.5}}
\onecm{\renewcommand{\arraystretch}{0.7}}
  \centering
  \caption{Computational time of the proposed algorithms and TV+PD (in seconds)}
  \label{tab1}
  \scalebox{.75}{\onecm{\tiny}
  \begin{tabular}{|l||c|c||c|c|c|}
    \hline
    Image & $\eta$ & $\sigma$ & ~~TV+PD & BCA & BCA$_f$\\\hline\hline

    \multirow{7}{*}{Circle} 
    & 1 & $10^{-1}$ & 40.4971 & 4.6052 &\bf 1.0314\\\cline{2-6}
    & 1 & $10^{-4}$ & 18.5340 & 4.6922 &\bf 1.2650\\\cline{2-6}
    & 4 & $10^{-1}$ & 7.3436 & 0.7641 &\bf 0.7001\\\cline{2-6}
    & 4 & $10^{-4}$ & 39.6196 & 1.1891 &\bf 0.6030\\\cline{2-6}
    & 16 & $10^{-1}$ & 36.3221 & 1.2652 &\bf 0.4643\\\cline{2-6}
    & 16 & $10^{-4}$ & 39.5773 & 0.6123 &\bf 0.3070\\\cline{2-6}
    & Average & & 30.3156 & 2.1880 &\bf 0.7285\\\hline\hline

    \multirow{7}{*}{\parbox[c]{0.07\textwidth}{
        Fluorescent\\Cells}
    } 
    & 1 & $10^{-1}$ & 21.7999 & 8.3016 &\bf 2.1735\\\cline{2-6}
    & 1 & $10^{-4}$ & 41.6892 & 8.9243 &\bf 2.5687\\\cline{2-6}
    & 4 & $10^{-1}$ & 40.1406 & 4.1263 &\bf 2.4025\\\cline{2-6}
    & 4 & $10^{-4}$ & 38.6224 & 3.8464 &\bf 2.0889\\\cline{2-6}
    & 16 & $10^{-1}$ & 37.7174 & 7.3265 &\bf 6.1742\\\cline{2-6}
    & 16 & $10^{-4}$ & 7.7788 & 7.6185 &\bf 1.1258 \\\cline{2-6}
    & Average & & 31.2914 & 6.6906 &\bf 2.7556\\\hline\hline

    \multirow{7}{*}{Cameraman} 
    & 1 & $10^{-1}$ & 9.3735 & 1.3062 &\bf 1.2272\\\cline{2-6}
    & 1 & $10^{-4}$ & 40.9848 & 3.2183 &\bf 0.9053\\\cline{2-6}
    & 4 & $10^{-1}$ & 36.8623 &\bf 0.6872 & 1.5084\\\cline{2-6}
    & 4 & $10^{-4}$ & 36.4358 &\bf 0.6179 & 1.4209\\\cline{2-6}
    & 16 & $10^{-1}$ & 3.3691 &\bf 0.5151 & 0.9925\\\cline{2-6}
    & 16 & $10^{-4}$ & 11.0407 &\bf 0.3607 & 0.5260\\\cline{2-6}
    & Average & & 23.0140 & 1.1176 &\bf 1.0967\\\hline

  \end{tabular}
}
\end{table*}

To show the convergence of BCA and BCA$_f$, we plot the convergence curves in Fig. \ref{fig4} ($\eta=1$ and $\sigma=10^{-1},10^{-4}$). One can readily see that the errors (SE) are quite steadily decreasing, demonstrating the convergence of the proposed algorithms.

\begin{figure}[]
 \centering 
 \includegraphics[width=0.49\columnwidth]{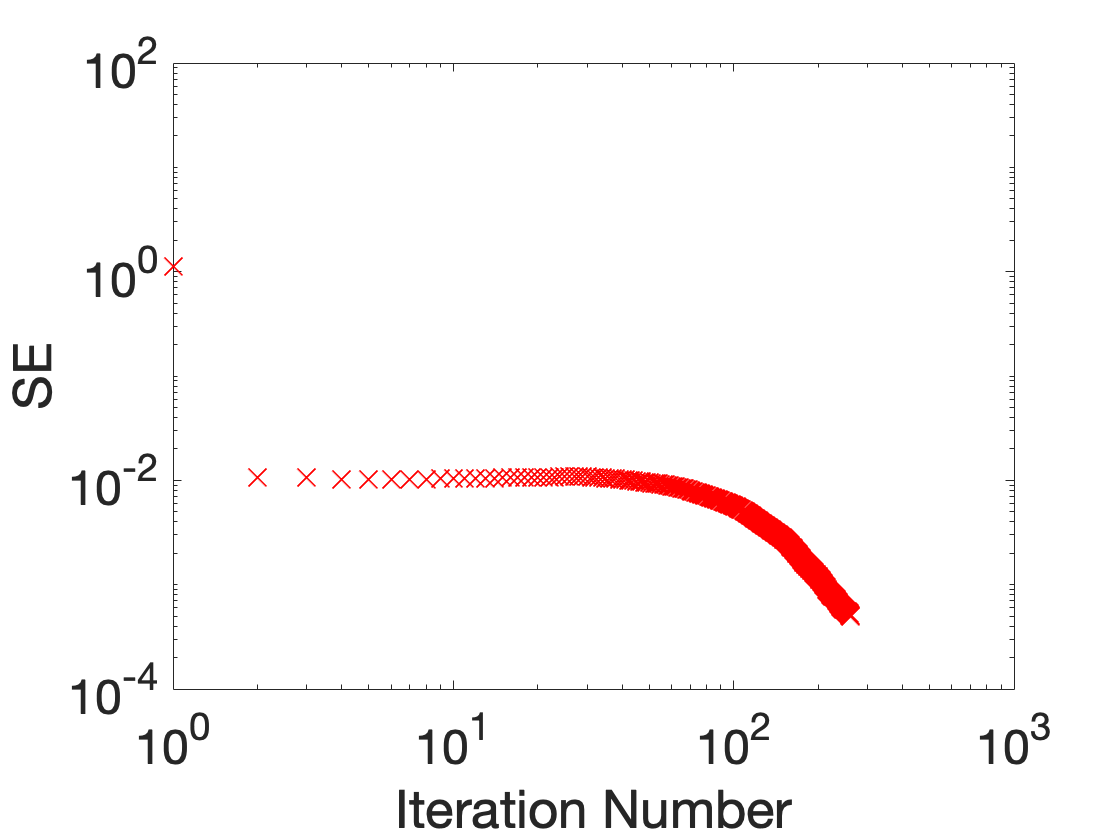}\
 \includegraphics[width=0.49\columnwidth]{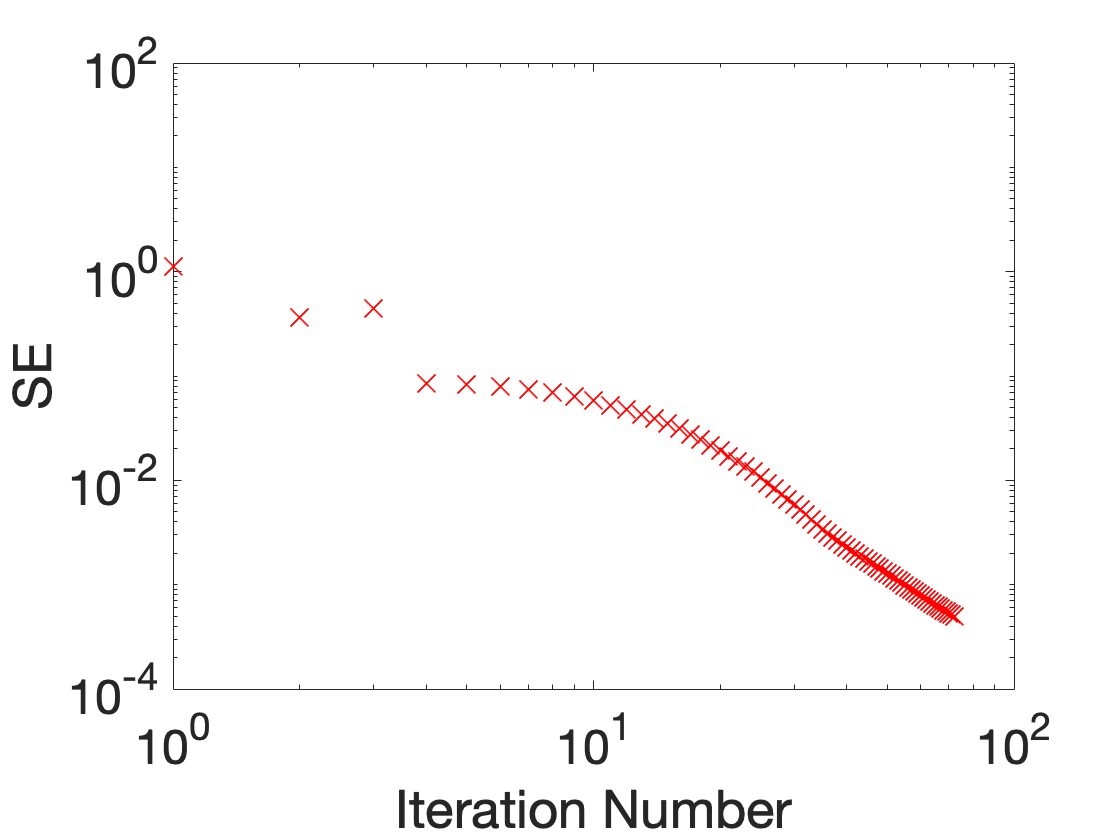}\\
 \includegraphics[width=0.49\columnwidth]{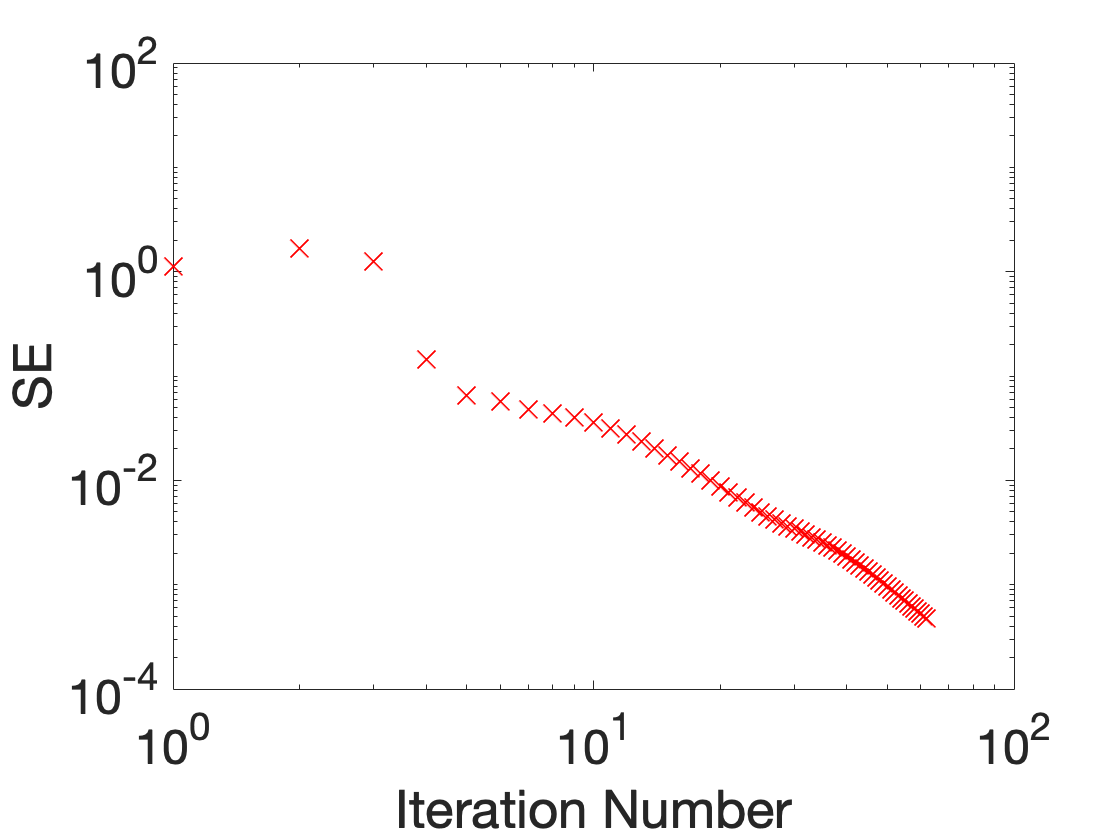}\
 \includegraphics[width=0.49\columnwidth]{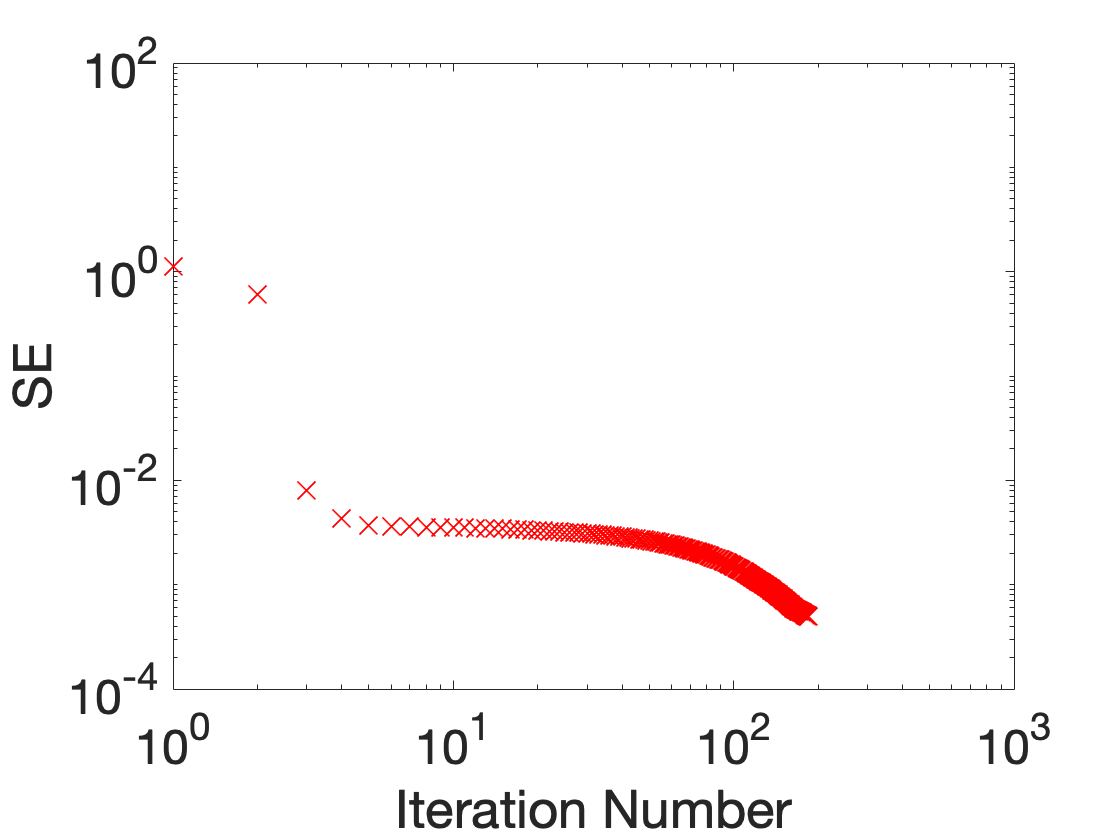}
 \caption{SE ($\tfrac{\Vert u_{k+1}-u_{k}\Vert}{\Vert u_{k}\Vert}$) changes  v.s. iteration number (both in log scale): Top: BCA; Bottom: BCA$_f$; Left: $\eta=1, \sigma=10^{-1}$; Right: $\eta=1, \sigma=10^{-4}$. The test image is  Circles(Fig\ref{fig1}(a)).}
 \label{fig4}
\end{figure}

\subsection{Robustness w.r.t.  the parameters}
One readily knows that the model parameters $\lambda_1$ and $\lambda_2$ are critical to recovery results, which essentially balance the data fitting terms and the regularization terms. Here we only study how the algorithm parameters $\alpha$, $\alpha_w$ and $\alpha_p$ affect  the performance of proposed BCA and BCA$_f$ respectively.
In Fig. \ref{fig5}, the SNR changes w.r.t. $\alpha$ for BCA algorithm is illustrated, in which $\lambda_1$ and $\lambda_2$ are fixed, and $\alpha$ varies from 20 to 2000. It is obviously that BCA is quite robust to the parameter $\alpha$.
As for BCA$_f$, the SNR changes w.r.t. different parameters are illustrated in Fig. \ref{fig6}(a), in which $\lambda_1, \lambda_2$ and $\alpha_p$ are fixed, and $\alpha_w$ varies from 10 to 4000. Similarly, the SNR changes are put in Fig. \ref{fig6}(b), where $\alpha_p$ varies from 10 to 5000. One can easily see that BCA$_f$ is quite robust to the parameter $\alpha_p$, while more sensitive to parameter $\alpha_w$.

\begin{figure}[]
 \centering 
 \includegraphics[width=0.49\columnwidth]{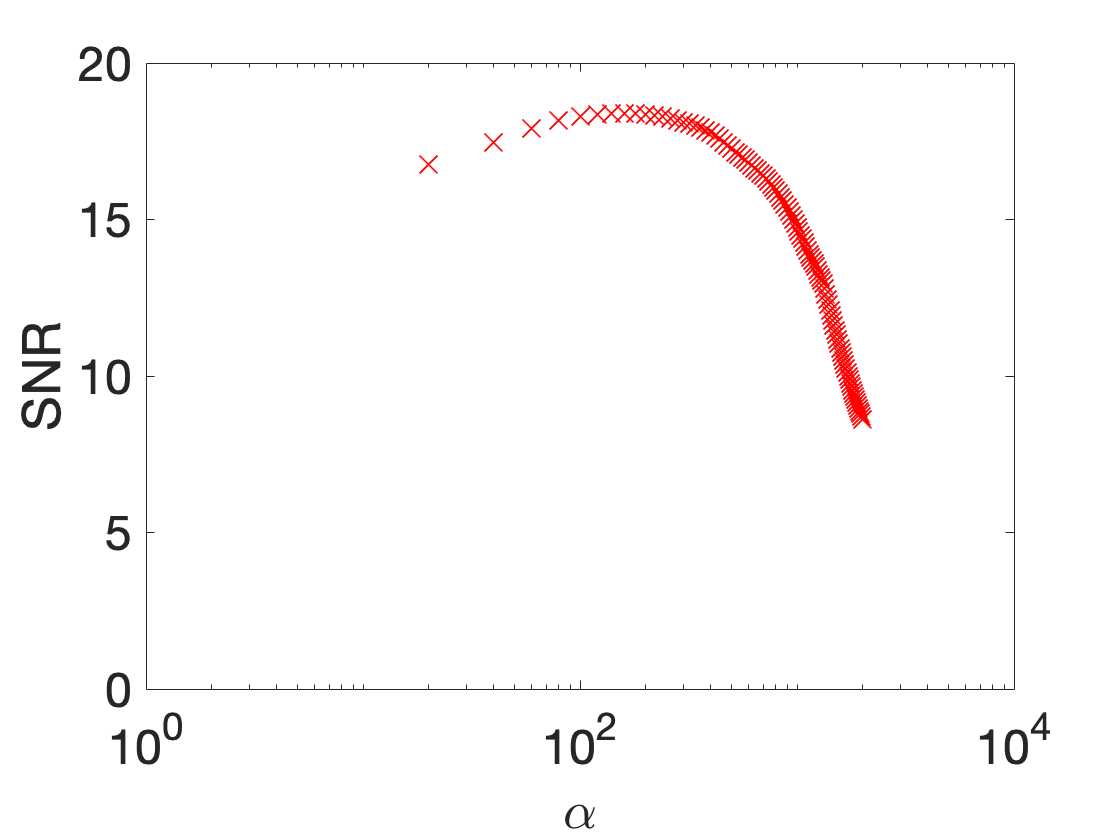}
 \caption{The performance of BCA w.r.t. $\alpha$ (in log scale), where $\eta=1, \sigma=10^{-4}$, using test image  Circles(Fig\ref{fig1}(a)). }
 \label{fig5}
\end{figure}

\begin{figure}[]
 \centering 
 \subfloat[]{\includegraphics[width=0.49\columnwidth]{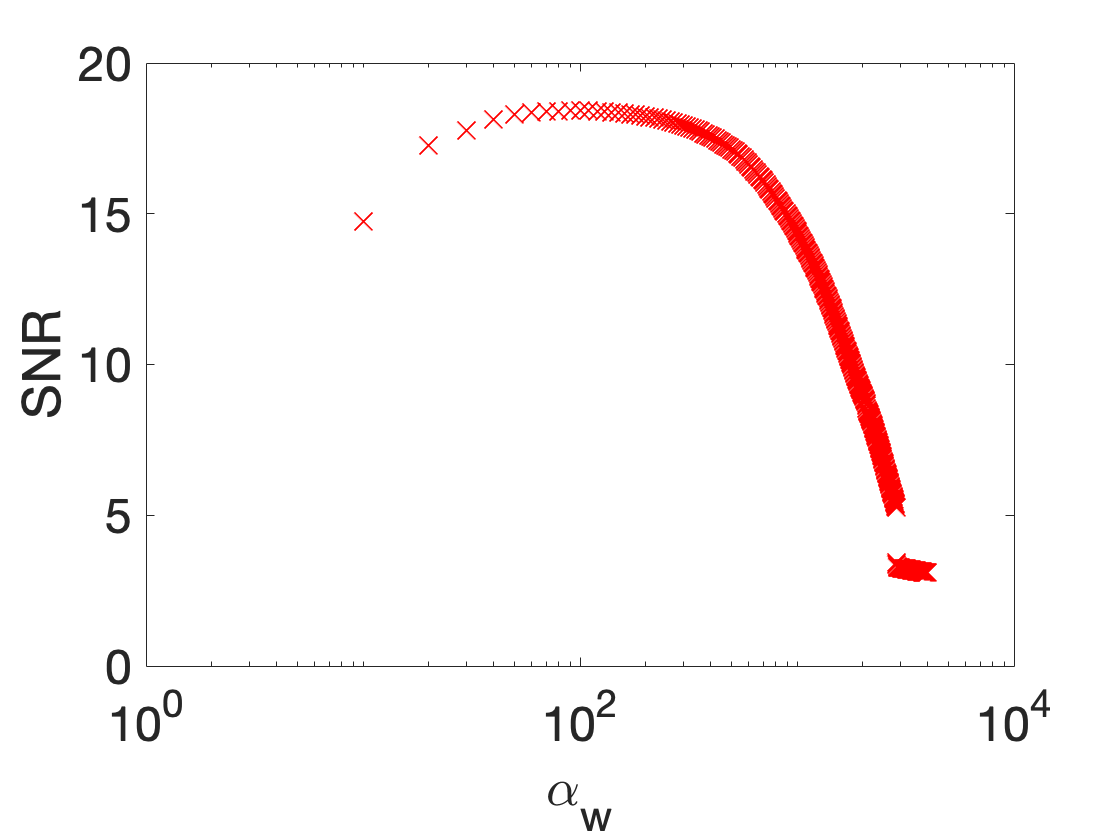}}\
 \subfloat[]{\includegraphics[width=0.49\columnwidth]{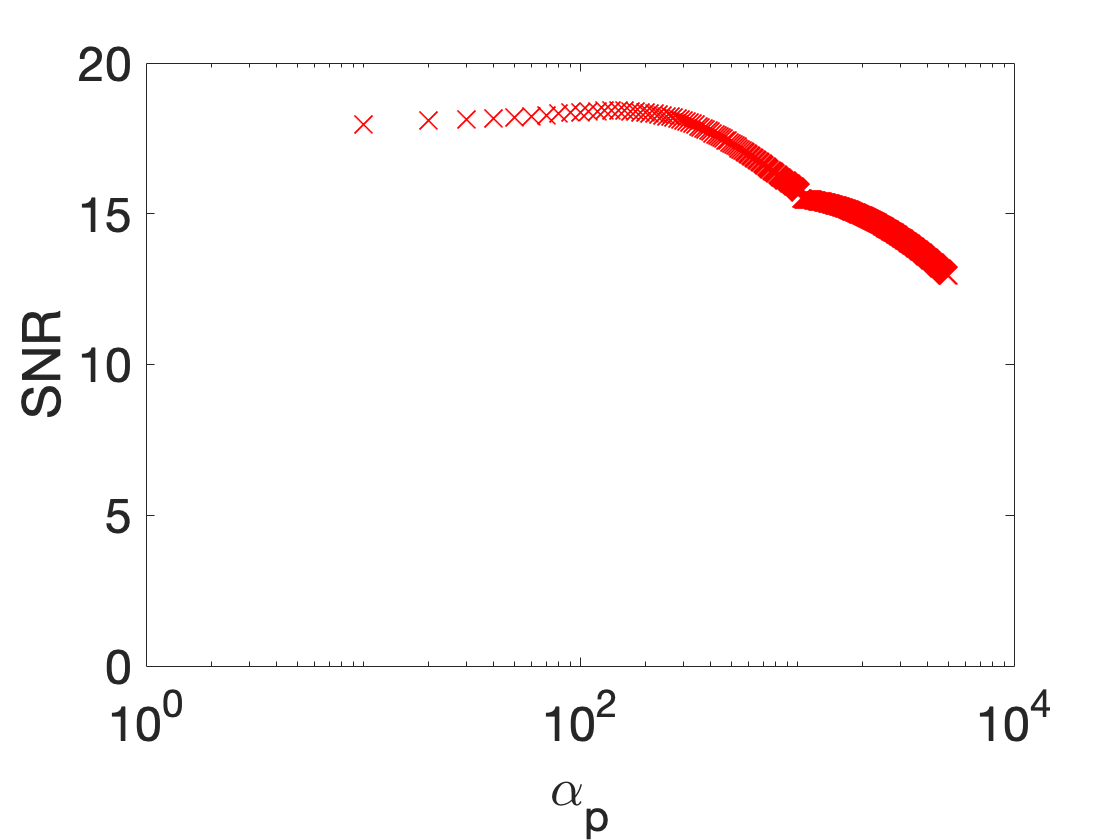}}
 \caption{The performance of BCA$_f$ w.r.t. $\alpha_w$ (in log scale), $\alpha_p$ (in log scale), where $\eta=1, \sigma=10^{-4}$, using test image  Circles(Fig\ref{fig1}(a))}
 \label{fig6}
\end{figure}

We introduce a positive scalar $\epsilon$  for convergence guarantee of proposed BCA algorithm. To demonstrate its reasonableness, we show the performances changes w.r.t. this parameter, and put the SNRs changes of recovery results  by BCA algorithm in Fig. \ref{fig9}. One can readily observe that when $\epsilon$ lies in the range from $10^{-10}$ to $10^{-1}$, the SNR value is quite stable. 

\begin{figure}[]
 \centering 
 \subfloat[]{\includegraphics[width=0.3\columnwidth]{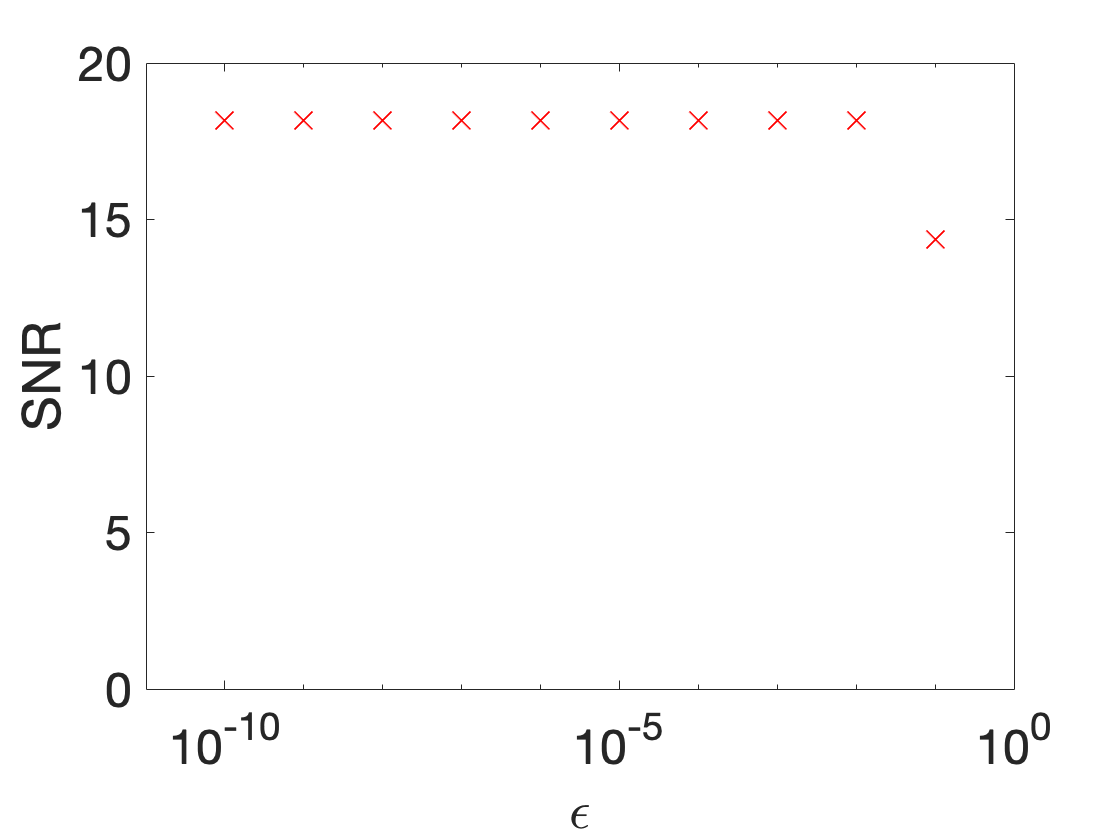}}\
 \subfloat[]{\includegraphics[width=0.3\columnwidth]{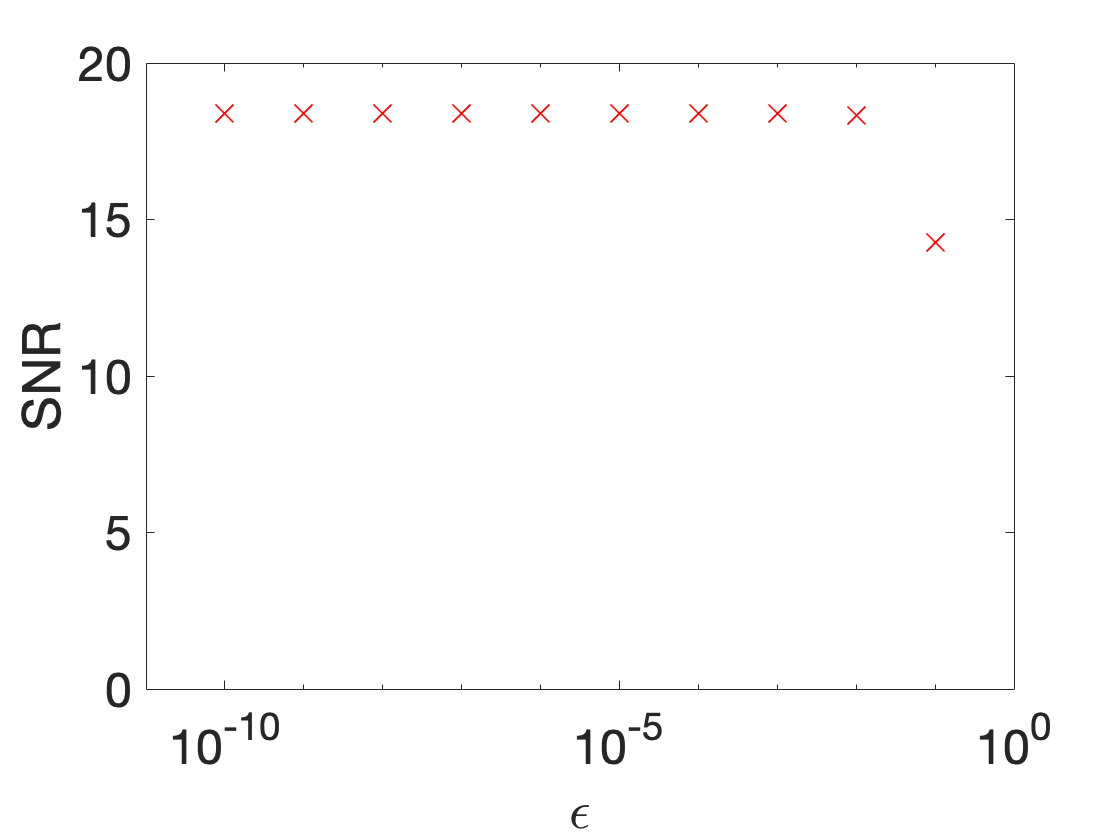}}\
 \subfloat[]{\includegraphics[width=0.3\columnwidth]{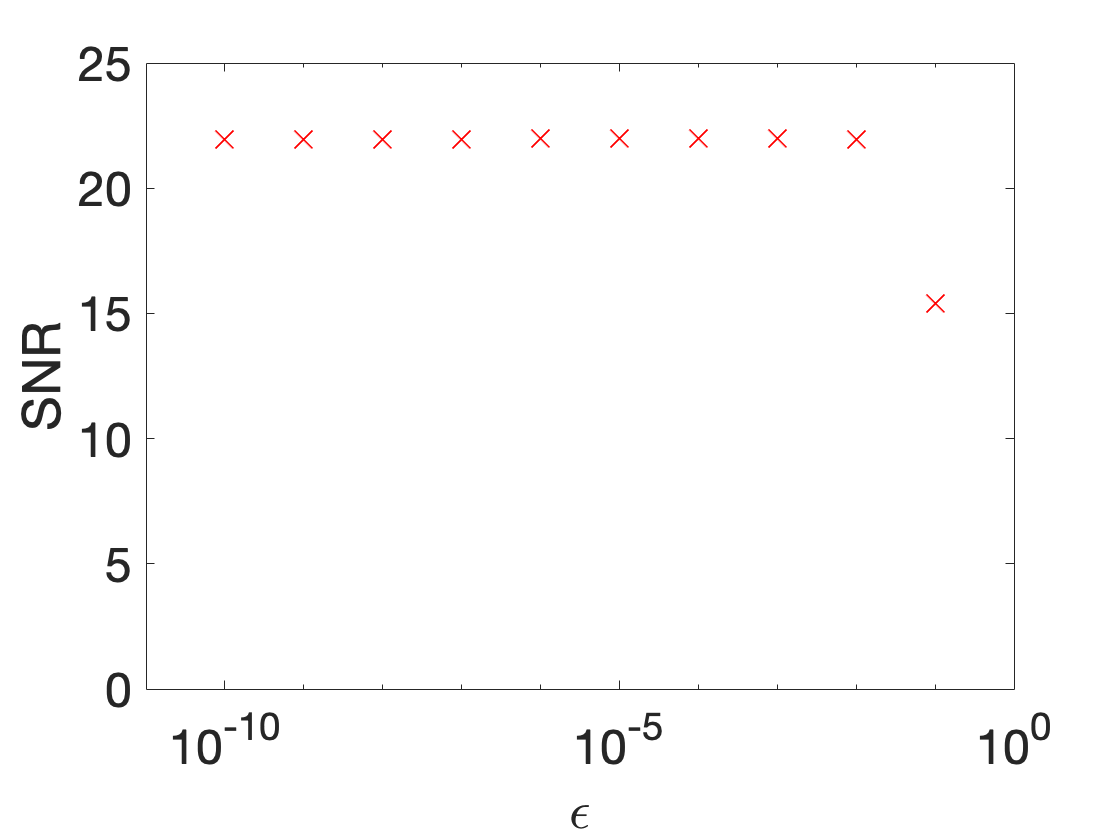}}\\
 \subfloat[]{\includegraphics[width=0.3\columnwidth]{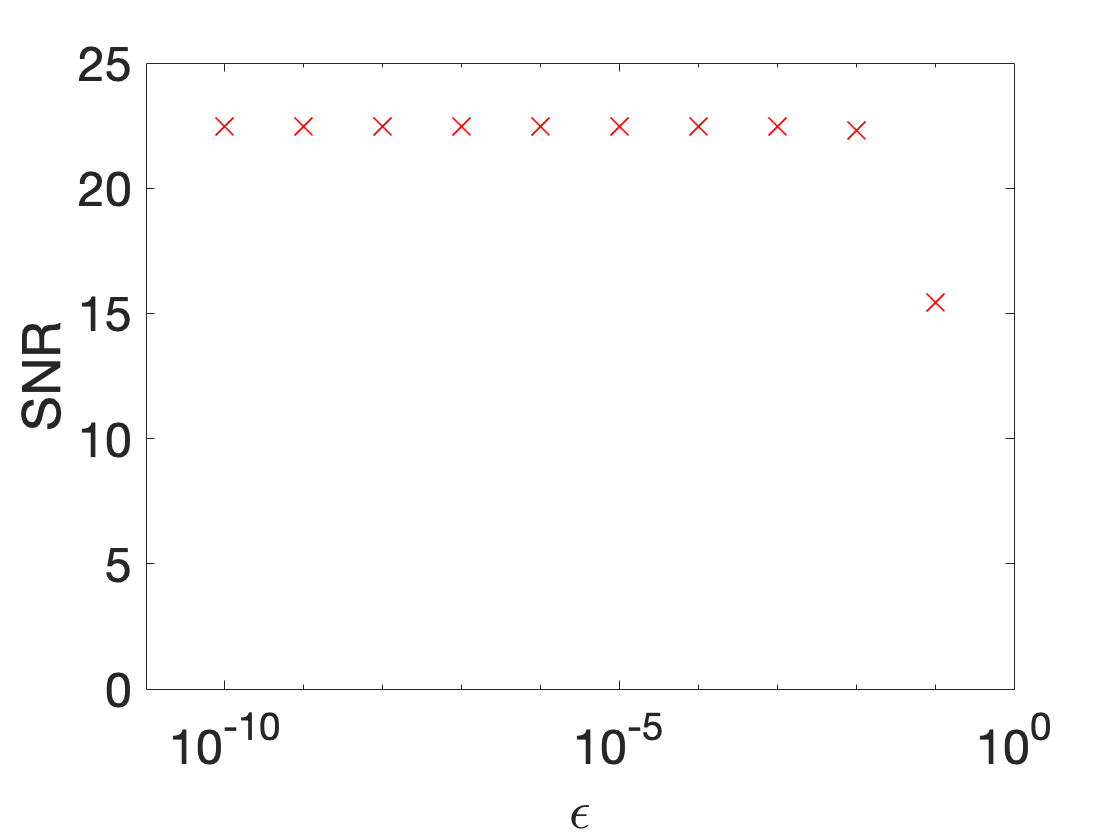}}\
 \subfloat[]{\includegraphics[width=0.3\columnwidth]{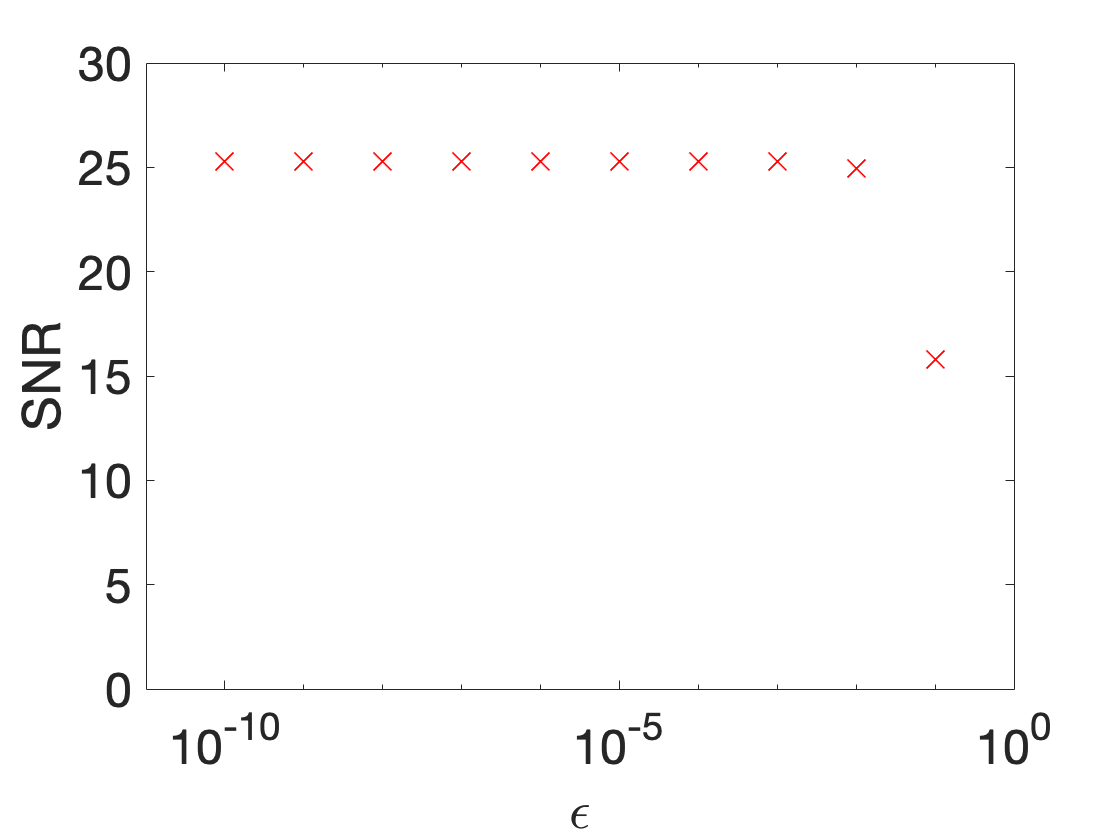}}\
 \subfloat[]{\includegraphics[width=0.3\columnwidth]{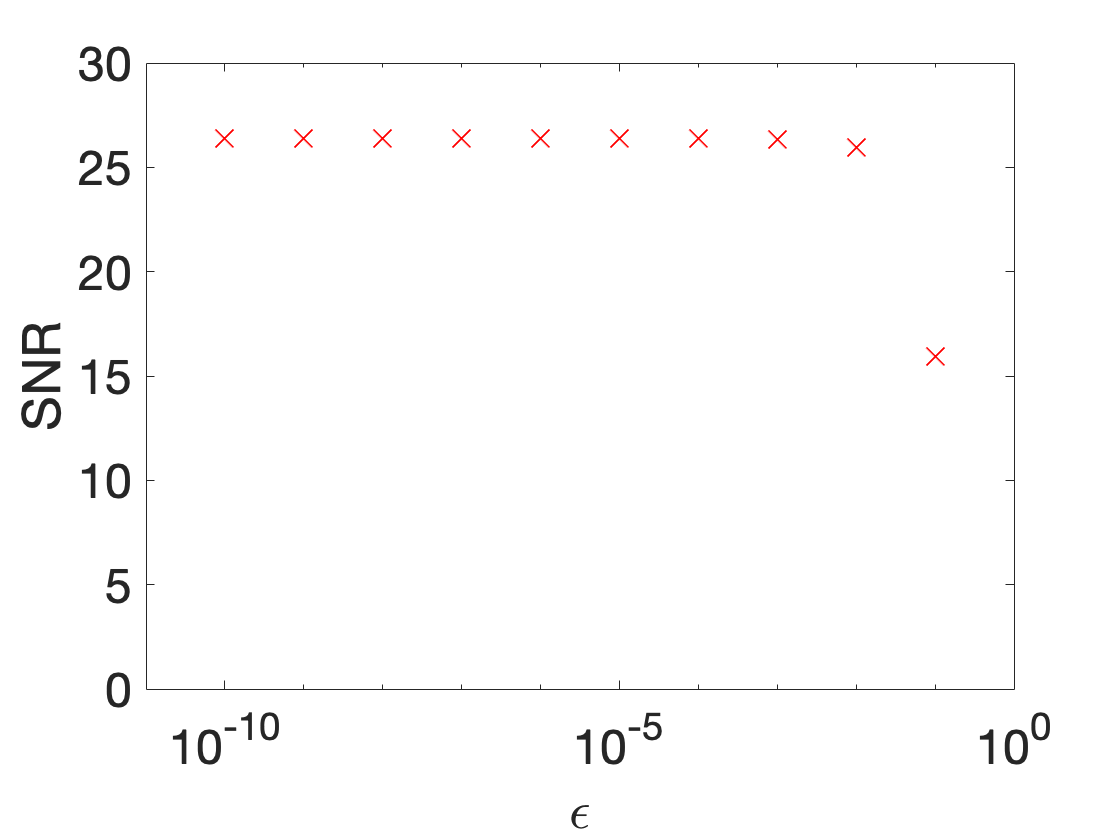}}
 \caption{SNR changes w.r.t.  the parameter $\epsilon$ for BCA Algorithm, using test image Circles(Fig\ref{fig1}(a)).}
 \label{fig9}
\end{figure}

\subsection{Numerical validations}
To validate the Assumption \ref{asum}  numerically, which is used for the convergence analysis of the proposed BCA, we plot the minimum value curves of the iterative solutions  $\{w^k\}$ (see Fig. \ref{fig7}), which clearly show that the minimum value of $w$  during iteration are almost bigger than 0.2, which show the reasonableness of this assumption.

\begin{figure}[]
 \centering 
 \subfloat[]{\includegraphics[width=0.3\columnwidth]{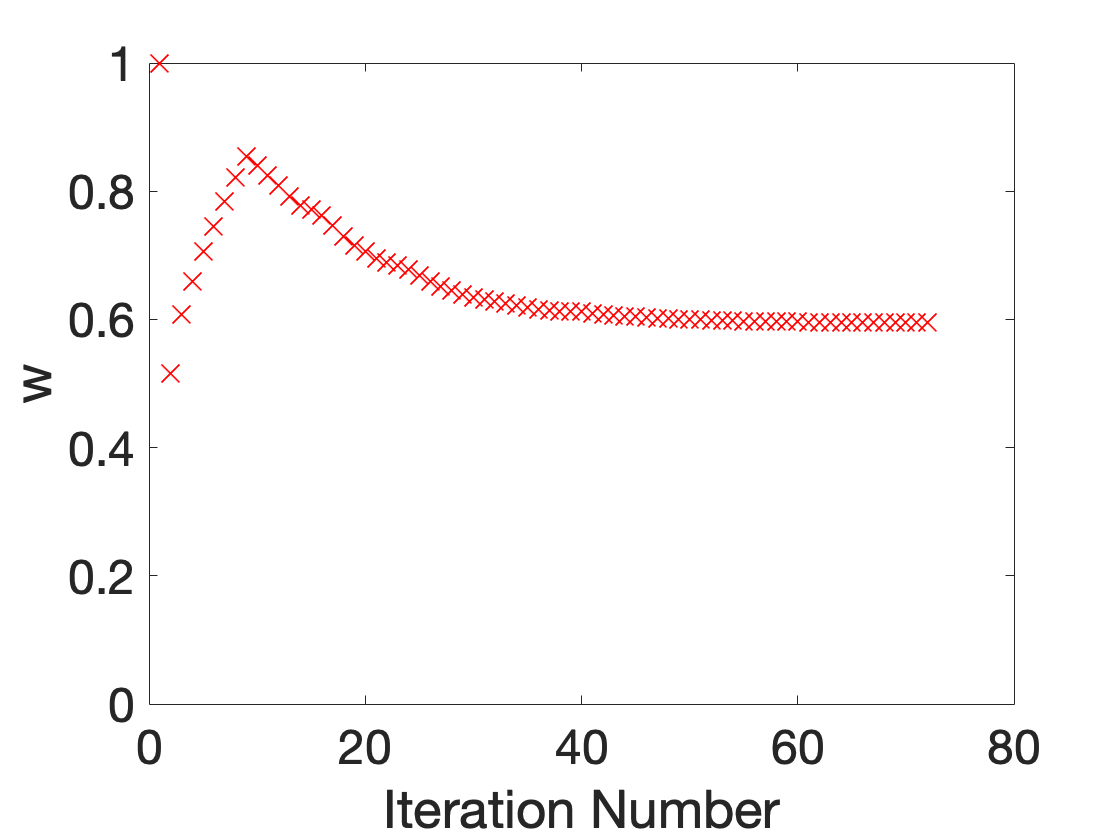}}\
 \subfloat[]{\includegraphics[width=0.3\columnwidth]{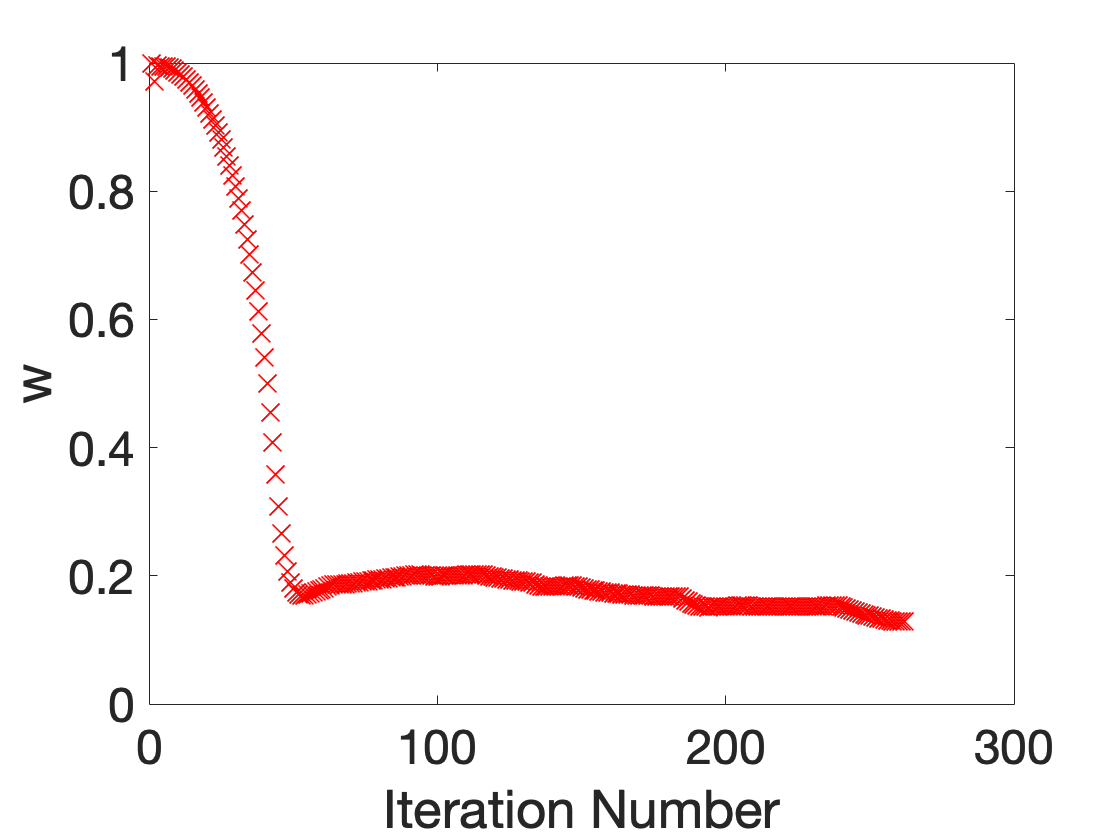}}\
 \subfloat[]{\includegraphics[width=0.3\columnwidth]{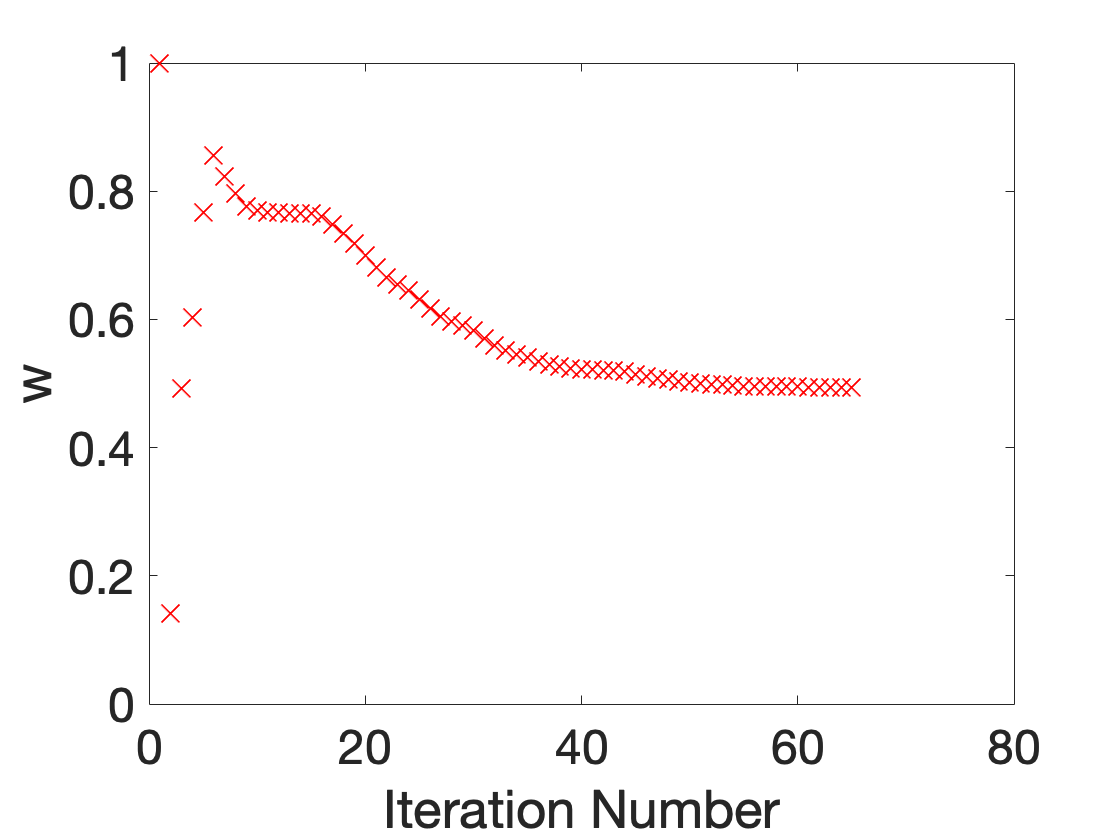}}\\
 \subfloat[]{\includegraphics[width=0.3\columnwidth]{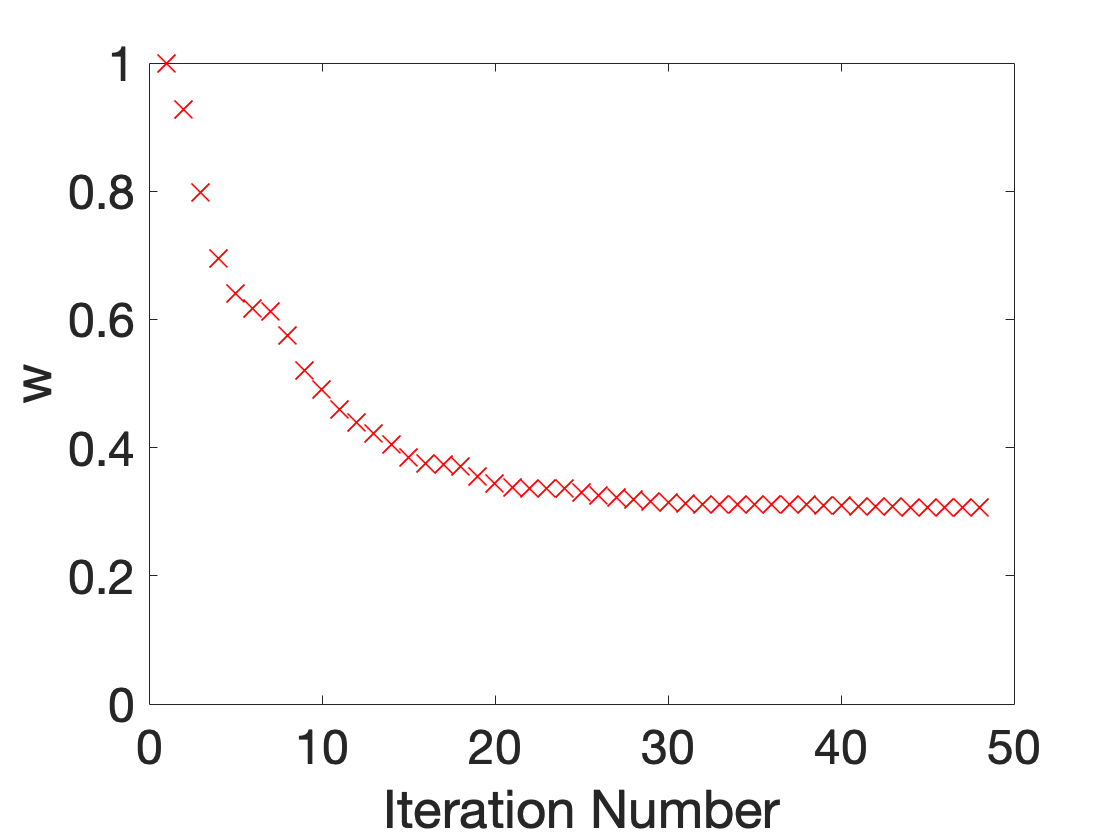}}\
 \subfloat[]{\includegraphics[width=0.3\columnwidth]{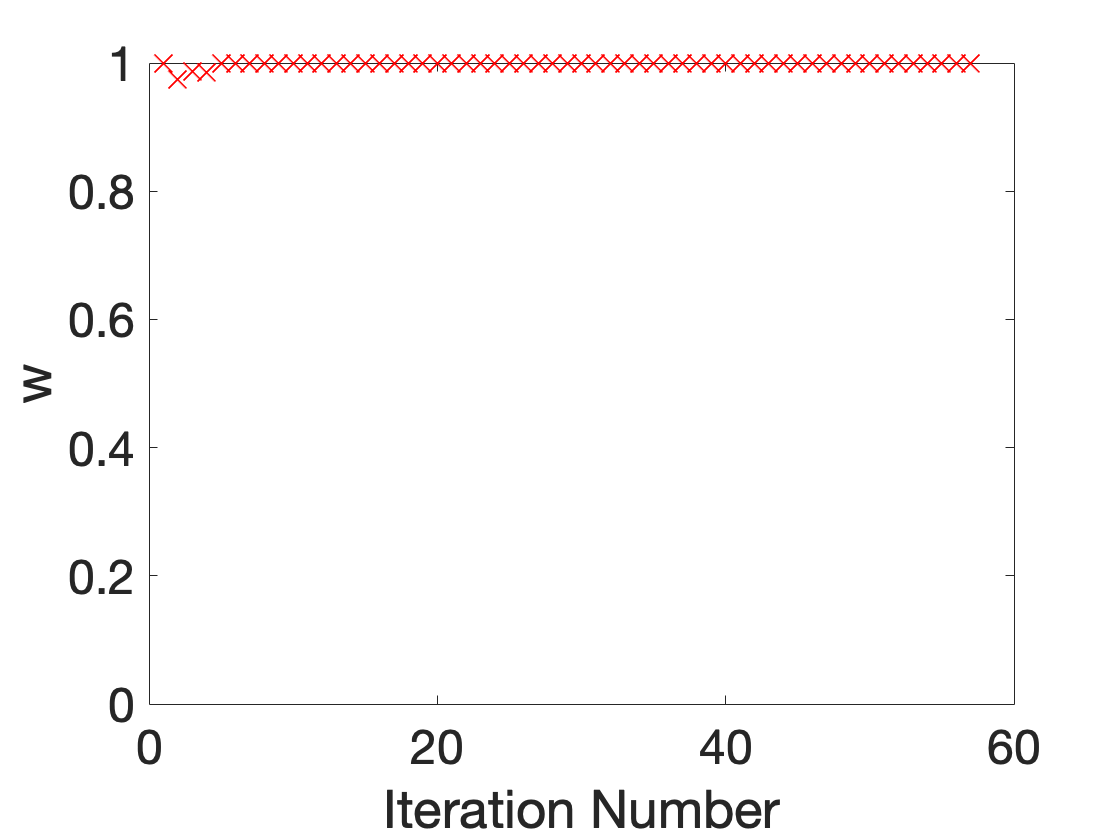}}\
 \subfloat[]{\includegraphics[width=0.3\columnwidth]{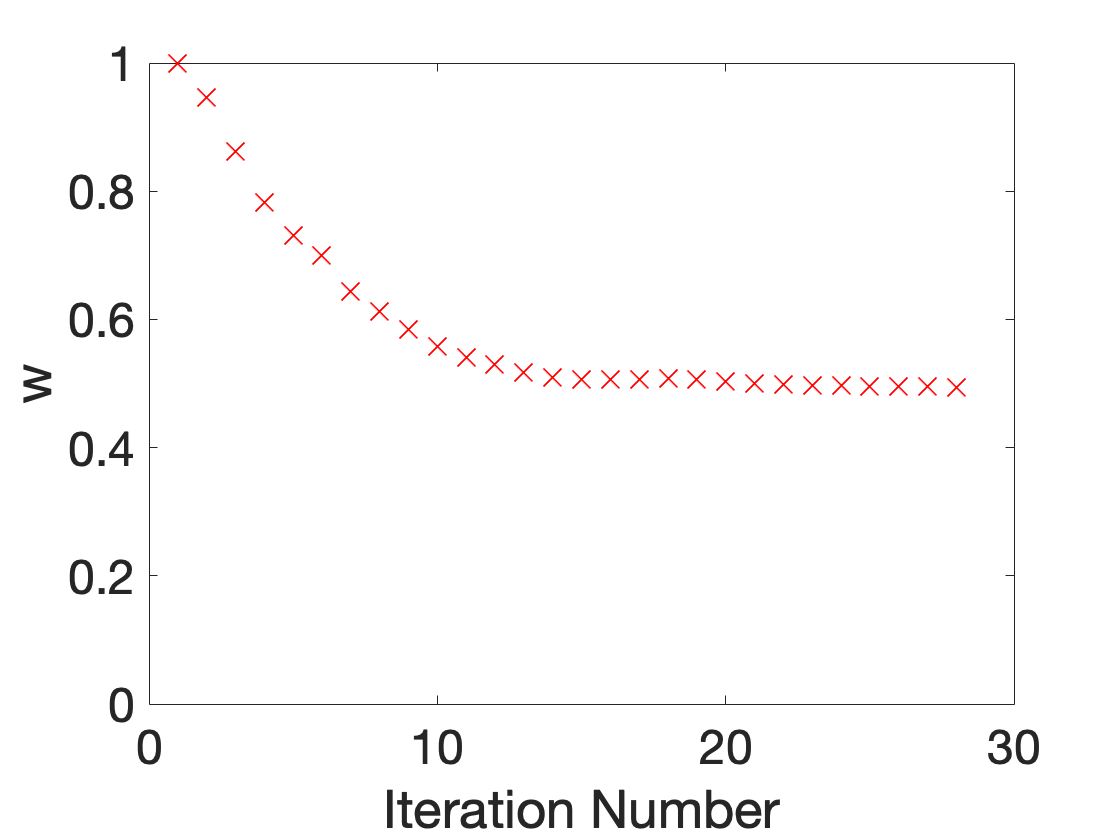}}
 \caption{Minimum value curves of $w$ for BCA Algorithm. The test image is Circles(Fig\ref{fig1}(a)).}
 \label{fig7}
\end{figure}

\section{Conclusion}

In this paper, we proposed new operator-splitting algorithms for MPG noise removal. A new bilinear constraint was introduced to ensure that all corresponding subproblems can be very efficiently solved. Numerical experiment{\color{red}(s)} showed that the proposed algorithms produced comparable  results visually. Especially, compared with the TV+PD solving the same variational model, the proposed algorithms with fewer tunable parameters produced better recovery results at much faster speed. In future, we aim to analyze the  convergence of the proposed algorithm BCA$_f$, particularly without Assumption \ref{asum}, although it is empirically verifiable. In addition, we are also interested in extending the proposed algorithms to more general image reconstruction problem \cite{ding2018statistical} as well as deblurring with background noise, and we also leave it as future work.

\bibliographystyle{plain}
\bibliography{mylib}
\medskip

\end{document}